\DeclareMathSymbol{\subsetneqq}{\mathbin}{AMSb}{36}
\newcommand{\R}{\mathbb{R}}
\newcommand{\N}{\mathbb{N}}
\newcommand{\C}{\mathbb{C}}
\newcommand{\beq}{\begin{eqnarray}}
\newcommand{\eeq}{\end{eqnarray}}
\newcommand{\bq}{\begin{equation}}
\newcommand{\eq}{\end{equation}}
\newcommand{\beqn}{\begin{eqnarray*}}
\newcommand{\eeqn}{\end{eqnarray*}}
\newcommand{\bex}{\begin{exo}}
\newcommand{\eex}{\end{exo}}
\newcommand{\ben}{\begin{enumerate}}
\newcommand{\een}{\end{enumerate}}
\newtheorem{th1}{{\bf Theorem}}[section]
\newtheorem{thm}[th1]{{\bf Theorem}}
\newtheorem{lem}[th1]{{\bf Lemma}}
\newtheorem{prop}[th1]{{\bf Proposition}}
\newtheorem{defi}[th1]{\bf Definition}
\author[R. Ghanmi and T. Saanouni]{R. Ghanmi and T. Saanouni}
\address{University Tunis El Manar,
Faculty of Sciences of Tunis, LR03ES04 partial differential equations and applications, 2092 Tunis, Tunisia.}
\email{\sl Tarek.saanouni@ipeiem.rnu.tn}
\email{\sl ghanmiradhia@gmail.com}
\subjclass{35Q55}
\keywords{Nonlinear Schr\"odinger system, global well-posedness, scattering.}
\title[Fourth-order coupled NLS]{On defocusing fourth-order coupled nonlinear Schr\"odinger equations}
\date{\today}
\begin{document}
\begin{abstract}
We investigate the initial value problem for some defocusing coupled nonlinear fourth-order Schr\"odinger equations. Global well-posedness and scattering in the energy space are obtained.
\end{abstract}
\maketitle
\tableofcontents
\vspace{ 1\baselineskip}
\renewcommand{\theequation}{\thesection.\arabic{equation}}
\section{Introduction}
We consider the initial value problem for a defocusign fourth-order Schr\"odinger system with power-type nonlinearities
\begin{equation}
\left\{
\begin{array}{ll}
i\frac{\partial }{\partial t}u_j +\Delta^2 u_j+ \displaystyle\sum_{k=1}^{m}a_{jk}|u_k|^p|u_j|^{p-2}u_j=0 ;\\
u_j(0,x)= \psi_{j}(x),
\label{S}
\end{array}
\right.
\end{equation}
where $u_j: \R \times \R^N \to \C$ for $j\in[1,m]$ and $a_{jk} =a_{kj}$ are positive real numbers.\\
Fourth-order Schr\"odinger equations have been introduced by Karpman \cite{Karpman} and Karpman-Shagalov \cite{Karpman 1} to take into account the role of small fourth-order dispersion terms in the propagation of intense laser beams in a bulk medium with Kerr nonlinearity.\\
The m-component coupled nonlinear Schr\"odinger system with power-type nonlinearities
\begin{eqnarray*}
i\frac{\partial }{\partial t}u_j +\Delta u_j= \pm \displaystyle\sum_{k=1}^{m}a_{jk}|u_k|^p|u_j|^{p-2}u_j ,
\end{eqnarray*}
arises in many physical problems. This models physical systems in which the field has more than one component. For example, in optical fibers and waveguides, the propagating electric field has two components that are transverse to the direction of propagation. Readers are referred to various other works \cite{Hasegawa, Zakharov} for the derivation and applications of this system.\\
A solution ${\bf u}:= (u_1,...,u_m)$ to \eqref{S} formally satisfies respectively conservation of the mass and the energy
\begin{gather*}
M(u_j):= \displaystyle\int_{\R^N}|u_j(x,t)|^2\,dx = M(\psi_{j});\\
E({\bf u}(t)):= \frac{1}{2}\displaystyle \sum_{j=1}^{m}\displaystyle\int_{\R^N}|\Delta u_j|^2\,dx + \frac{1}{2p}\displaystyle \sum_{j,k=1}^{m}a_{jk}\displaystyle \int_{\R^N} |u_j(x,t)|^p |u_k(x,t)|^p\,dx = E({\bf u}(0)).
\end{gather*}

Before going further let us recall some historic facts about this problem. 
The model case given by a pure power nonlinearity is of particular interest. The question of well-posedness in the energy space $H^2$ was widely investigated. We denote for $p>1$ the fourth-order Schr\"odinger problem
$$(NLS)_p\quad i\partial_t u+\Delta^2u\pm u|u|^{p-1}=0,\quad u:{\mathbb R}\times{\mathbb R}^N\rightarrow{\mathbb C}.$$
This equation satisfies a scaling invariance. In fact, if $u$ is a solution to $(NLS)_p$ with data $u_0$, then
$ u_\lambda:=\lambda^{\frac4{p-1}}u(\lambda^4\, .\,,\lambda\, .\,)$
is a solution to $(NLS)_p$ with data $\lambda^{\frac4{p-1}}u_0(\lambda\,.\,).$
For $s_c:=\frac N2-\frac4{p-1}$, the space $\dot H^{s_c}$ whose norm is invariant under the dilatation $u\mapsto u_{\lambda}$ is relevant in this theory. When $s_c=2$ which is the energy critical case, the critical power is $p_c:=\frac{N+4}{N-4}$, $N\geq 5$. Pausader \cite{Pausader} established global well-posedness in the defocusing subcritical case, namely $1< p < p_c$. Moreover, he established global well-posedness and scattering for radial data in the defocusing critical case, namely $p=p_c$. The same result in the critical case without radial condition was obtained by Miao, Xu and Zhao \cite{Miao}, for $N\geq 9$. The focusing case was treated by the last authors in \cite{Miao 1}. They obtained results similar to one proved by Kenig and Merle \cite{Merle} in the classical Schr\"odinger case. See \cite{ts} in the case of exponential nonlinearity.\\

In this note, we combine in some meaning the two problems $(NLS)_p$ and $(CNLS)_p.$ Thus, we have to overcome two difficulties. The first one is the presence of bilaplacian in Schr\"odinger operator and the second is the coupled nonlinearities. We claim that the critical exponent for local well-posedness of \eqref{S} in the energy space is $p=\frac{N}{N - 4}.$ But some technical difficulties yield the condition $4\leq N \leq 6$.\\
It is the purpose of this manusrcipt to obtain global well-posedness and scattering of \eqref{S} via Morawetz estimate.\\

The rest of the paper is organized as follows. The next section contains the main results and some technical tools needed in the sequel. The third and fourth sections are devoted to prove well-posedness of \eqref{S}. In section five, scattering is established. In appendix, we give a proof of Morawetz estimate and a blow-up criterion.
\\
We define the product space
$$H:={H^2({\R^N})\times...\times H^2({\R^N})}=[H^2({\R^N})]^m$$
where $H^2(\R^N)$ is the usual Sobolev space endowed with the complete norm 
$$ \|u\|_{H^2(\R^N)} := \Big(\|u\|_{L^2(\R^N)}^2 + \|\Delta u\|_{L^2(\R^N)}^2\Big)^\frac12.$$
We denote the real numbers 
 $$p_*:=1+\frac4N\quad\mbox{ and }\quad p^*:=\left\{
\begin{array}{ll}
\frac{N}{N-4}\quad\mbox{if}\quad N>4;\\
\infty\quad\mbox{if}\quad N=4.
\end{array}
\right.$$
We mention that $C$ will denote a
constant which may vary
from line to line and if $A$ and $B$ are nonnegative real numbers, $A\lesssim B$  means that $A\leq CB$. For $1\leq r\leq\infty$ and $(s,T)\in [1,\infty)\times (0,\infty)$, we denote the Lebesgue space $L^r:=L^r({\mathbb R}^N)$ with the usual norm $\|\,.\,\|_r:=\|\,.\,\|_{L^r}$, $\|\,.\,\|:=\|\,.\,\|_2$ and 
$$\|u\|_{L_T^s(L^r)}:=\Big(\int_{0}^{T}\|u(t)\|_r^s\,dt\Big)^{\frac{1}{s}},\quad \|u\|_{L^s(L^r)}:=\Big(\int_{0}^{+\infty}\|u(t)\|_r^s\,dt\Big)^{\frac{1}{s}}.$$
For simplicity, we denote the usual Sobolev Space $W^{s,p}:=W^{s,p}({\mathbb R}^N)$ and $H^s:=W^{s,2}$. If $X$ is an abstract space $C_T(X):=C([0,T],X)$ stands for the set of continuous functions valued in $X$ and $X_{rd}$ is the set of radial elements in $X$, moreover for an eventual solution to \eqref{S}, we denote $T^*>0$ it's lifespan.

\section{Background Material}
In what follows, we give the main results and collect some estimates needed in the sequel.
\subsection{Main results}
First, local well-posedness of the fourth-order Schr\"odinger problem \eqref{S} is obtained.
\begin{thm}\label{existence}
Let $4\leq N\leq 6$, $ 1< p \leq p^*$ and $ \Psi \in H$. Then, there exist $T^*>0$ and a unique maximal solution to \eqref{S}, 
$$ {\bf u} \in C ([0, T^*), H).$$ Moreover,
\begin{enumerate}
\item ${\bf u}\in \big(L^{\frac{8p}{N(p-1)}}([0, T^*], W^{2,2p})\big)^{(m)};$
\item ${\bf u}$ satisfies conservation of the energy and the mass;
\item $T^*=\infty$ in the subcritical case $(1<p<p^*)$.
\end{enumerate}
\end{thm}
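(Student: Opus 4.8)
The plan is to run a contraction-mapping argument on the Duhamel formulation
\[
u_j(t)=e^{it\Delta^2}\psi_j+i\int_0^t e^{i(t-s)\Delta^2}\Big(\sum_{k=1}^m a_{jk}|u_k|^p|u_j|^{p-2}u_j\Big)(s)\,ds=:\Phi_j({\bf u})(t),
\]
using the Strichartz estimates for the biharmonic propagator $e^{it\Delta^2}$. The natural exponents are $q:=\frac{8p}{N(p-1)}$ and $r:=2p$, for which one checks $\frac4q+\frac Nr=\frac N2$, so that $(q,r)$ is an admissible pair; the hypothesis $1<p\le p^*$ is precisely what makes $q\ge2$ and, simultaneously, yields the Sobolev embedding $H^2(\R^N)\hookrightarrow L^{2p}(\R^N)$ that the nonlinear estimates require. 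I would set up the resolution space as a closed ball of $C([0,T],H)\cap\big(L_T^q(W^{2,r})\big)^m$, but measure distances with the weaker $\big(L_T^q(L^r)\big)^m$-metric: since $z\mapsto|z|^{p-2}z$ is merely H\"older-continuous when $p$ is not an odd integer, the contraction cannot be closed at the level of two derivatives, only at the $L^r$ level, while the ball is still complete for the weak metric.

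The heart of the matter is the nonlinear estimate. Writing $F_j({\bf u}):=\sum_k a_{jk}|u_k|^p|u_j|^{p-2}u_j$, one needs a bound of the form
\[
\|F_j({\bf u})\|_{L_T^{q'}(W^{2,r'})}\lesssim T^{\theta}\,\Big(\sum_k\|u_k\|_{L_T^q(W^{2,r})}\Big)^{2p-1},
\]
with $\theta>0$ in the subcritical range $1<p<p^*$ and $\theta=0$ when $p=p^*$, together with the companion Lipschitz estimate for $F_j({\bf u})-F_j({\bf v})$ in the $L_T^{q'}(L^{r'})$ norm. This is obtained from H\"older's inequality in space and time, the fractional Leibniz rule to spread the two derivatives over the $2p-1$ factors, the fractional chain rule applied to $z\mapsto|z|^p$ and $z\mapsto|z|^{p-2}z$, and the embedding $H^2\hookrightarrow L^{2p}$ to absorb the undifferentiated factors; the gain $T^{\theta}$ comes from the H\"older step in $t$, available because in the subcritical range $q$ stays strictly above the endpoint value $2$.

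Plugging these estimates into the Strichartz inequalities for $\Phi=(\Phi_1,\dots,\Phi_m)$, run with the pair $(q,r)$ and with the energy pair $(\infty,2)$, shows that for $T$ small enough $\Phi$ maps the ball into itself and contracts it in the weak metric. In the subcritical case the size of $T$ depends only on $\|\Psi\|_H$; in the critical case it is chosen so that $\sum_j\|e^{i\cdot\Delta^2}\psi_j\|_{L_T^q(W^{2,r})}$ is small, which is legitimate because this quantity is finite on $\R$ (by Strichartz it is $\lesssim\|\Psi\|_H$) and hence tends to $0$ as $T\to0^+$. The fixed point is the sought solution; a further application of Strichartz with the pair $(\infty,2)$ yields ${\bf u}\in C([0,T],H)$, item $(1)$ is built into the construction, and the usual gluing argument produces a maximal solution on $[0,T^*)$ together with the blow-up alternative: either $T^*=\infty$, or $\|{\bf u}(t)\|_H\to\infty$ as $t\to T^*$.

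For item $(2)$ I would regularize the initial data, verify conservation of $M$ and $E$ for the resulting smooth solutions by differentiating in time, and pass to the limit using continuous dependence on the data. Item $(3)$ then follows: in the defocusing case both terms of $E({\bf u}(t))=\frac12\sum_j\|\Delta u_j(t)\|^2+\frac1{2p}\sum_{j,k}a_{jk}\int_{\R^N}|u_j|^p|u_k|^p$ are nonnegative, so energy conservation gives $\sum_j\|\Delta u_j(t)\|^2\le2E(\Psi)$ and mass conservation gives $\|u_j(t)\|=\|\psi_j\|$; hence $\sup_{0\le t<T^*}\|{\bf u}(t)\|_H<\infty$, and since in the subcritical case the local existence time depends only on $\|{\bf u}(t)\|_H$, the blow-up alternative forces $T^*=\infty$. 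I expect the main obstacle to be the nonlinear $W^{2,r'}$-estimate for the coupled, non-smooth nonlinearity — in particular controlling two derivatives that fall on a factor $|u_j|^{p-2}u_j$ with $1<p<2$ — and it is exactly in order to keep all the interpolation exponents in the fractional Leibniz and chain rules admissible that the restriction $4\le N\le6$ is imposed.
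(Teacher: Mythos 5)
Your proposal follows essentially the same route as the paper: a Duhamel/Strichartz fixed-point argument with the same admissible pair $(q,r)=(\frac{8p}{N(p-1)},2p)$ in the same resolution space $C_T(H)\cap (L^q_T(W^{2,2p}))^{(m)}$, the same H\"older--Sobolev nonlinear estimates yielding a factor $T^\theta$, and the same globalization via the uniform local existence time furnished by conservation of the (defocusing) energy and mass. The only notable difference is that you close the contraction in the weaker $(L^q_T(L^{2p}))^{(m)}$ metric (Kato's trick), whereas the paper contracts in the full norm by differentiating the nonlinearity twice explicitly — your variant is in fact the more robust way to handle the merely H\"older-continuous map $z\mapsto|z|^{p-2}z$, and the difficulty you flag (two derivatives landing on $|u_j|^{p-2}u_j$ for small $p$) is present, and not resolved, in the paper's own computation as well.
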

In the critical case, global existence for small data holds in the energy space.
\begin{thm}\label{glb}
Let $4<N\leq6$ and $p=p^*.$ There exists $\epsilon_0>0$ such that if $\Psi:=(\psi_1,...,\psi_m) \in H$ satisfies $\xi(\Psi):= \displaystyle \sum_{j=1}^m\displaystyle\int_{\R^N}|\Delta \psi_j|^2\,dx\leq \epsilon_0$, the system \eqref{S} possesses a unique solution ${\bf u}\in C(\R, H)$.
\end{thm}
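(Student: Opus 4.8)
The plan is to promote the maximal solution given by Theorem~\ref{existence} to a global one by bounding its critical Strichartz norm on the entire lifespan; the smallness of $\xi(\Psi)$ will enter through the conservation laws, forcing the nonlinearity to be a genuine perturbation. Since $p=p^*$ one has $\frac{8p}{N(p-1)}=2$ and $2p=\frac{2N}{N-4}$, so Theorem~\ref{existence} provides a maximal solution ${\bf u}\in C([0,T^*),H)$ of \eqref{S} that conserves the mass and the energy and lies in $\big(L^2_{\rm loc}([0,T^*),W^{2,\frac{2N}{N-4}})\big)^m$; write Duhamel's formula
\[
u_j(t)=e^{it\Delta^2}\psi_j+i\int_0^t e^{i(t-s)\Delta^2}F_j({\bf u})(s)\,ds,\qquad F_j({\bf u}):=\sum_{k=1}^m a_{jk}\,|u_k|^{p^*}|u_j|^{p^*-2}u_j,
\]
and set $Z(T):=\sum_{j=1}^m\|u_j\|_{L^2([0,T],W^{2,\frac{2N}{N-4}})}$ for $T<T^*$. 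Because $N>4$ the Sobolev embedding $\dot H^2(\R^N)\hookrightarrow L^{\frac{2N}{N-4}}(\R^N)$ holds, whence $\int_{\R^N}|\psi_j|^{p^*}|\psi_k|^{p^*}\,dx\lesssim(\|\Delta\psi_j\|\,\|\Delta\psi_k\|)^{p^*}$ and $E(\Psi)\lesssim\xi(\Psi)+\xi(\Psi)^{p^*}\lesssim\epsilon_0$ for $\epsilon_0\le1$; conservation of the energy then gives $\sup_{t\in[0,T^*)}\|\Delta u_j(t)\|^2\le 2E(\Psi)\lesssim\epsilon_0$, and conservation of the mass gives $\|u_j(t)\|=\|\psi_j\|$.

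The core of the argument is the estimate of $F$. Feeding Duhamel's formula into the biharmonic Strichartz estimates attached to the admissible pair $\big(2,\frac{2N}{N-4}\big)$ and its dual (here $N>4$ is what makes $q=2$ an allowed exponent), one gets, for $T<T^*$,
\[
Z(T)\ \le\ C\,\|\Psi\|_H\ +\ C\,\big\|F({\bf u})\big\|_{L^2([0,T],W^{2,\frac{2N}{N+4}})^m}.
\]
Then I would expand each monomial of $F_j$ into its $2p^*-1$ factors and, using the fractional Leibniz rule together with Hölder's inequality, let exactly one factor carry the two derivatives and put it in $L^2_TL^{\frac{2N}{N-4}}_x$, while the remaining $2p^*-2$ factors go into $L^\infty_TL^{\frac{2N}{N-4}}_x$ and are controlled, via $\dot H^2\hookrightarrow L^{\frac{2N}{N-4}}$, by $\sup_{[0,T^*)}\|\Delta u\|\lesssim\sqrt{\epsilon_0}$; this yields
\[
\big\|F({\bf u})\big\|_{L^2([0,T],W^{2,\frac{2N}{N+4}})^m}\ \lesssim\ \Big(\sup_{[0,T^*)}\|\Delta{\bf u}\|\Big)^{2p^*-2}Z(T)\ \lesssim\ \epsilon_0^{\,p^*-1}\,Z(T).
\]
Combining the two displays, $Z(T)\le C\|\Psi\|_H+C\epsilon_0^{\,p^*-1}Z(T)$ for every $T<T^*$; choosing $\epsilon_0$ so small that $C\epsilon_0^{\,p^*-1}\le\frac14$ — a smallness \emph{independent of} $\|\Psi\|_H$ — and running the usual continuity argument from $Z(0)=0$, one obtains $Z(T)\le 2C\|\Psi\|_H$ uniformly in $T<T^*$.

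If $T^*$ were finite this uniform bound would contradict the blow-up criterion of the appendix, which forces the critical norm to become infinite at a finite maximal time; hence $T^*=+\infty$. Applying the same reasoning to $\overline{u_j(-t,x)}$ takes care of negative times, and uniqueness is the one already supplied by Theorem~\ref{existence}, so ${\bf u}\in C(\R,H)$ is the sought global solution.

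The main obstacle is the nonlinear estimate of the second paragraph. It requires the biharmonic Strichartz inequalities with the right gain of regularity and admissible range, and a product/chain-rule bound for $|\nabla|^2\big(|u_k|^{p^*}|u_j|^{p^*-2}u_j\big)$; the latter is where the restriction $4<N\le6$ really bites, since $z\mapsto|z|^{p^*-2}z$ fails to be $C^{1,1}$ — so that $\Delta\big(|z|^{p^*-2}z\big)$ cannot even be expanded by the chain rule — as soon as $N>6$, the borderline case $N=6$ (where $p^*=3$ and this map is exactly $C^{1,1}$) being the delicate one. Equally important is the bookkeeping: the $2p^*-2$ ``spare'' factors must be measured by the \emph{homogeneous} norm $\|\Delta u\|$, the only quantity the hypothesis makes small; pairing them with the conserved mass $\|u\|_{L^2}$ instead would make the absorbing constant depend on $\|\Psi\|_{L^2}$, destroying the uniformity of $\epsilon_0$.
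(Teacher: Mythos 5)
Your proposal is correct in outline but takes a genuinely different route from the paper's. The paper never runs a global Strichartz bootstrap: it first establishes Proposition \ref{proposition 1}, a critical local theory whose only hypothesis is smallness of the \emph{free} evolution in the $W(I)$-norm; since $\|e^{it\Delta^2}\psi\|_{W(I)}\lesssim\|e^{it\Delta^2}\psi\|_{M(I)}\lesssim\|\Delta\psi\|_{L^2}$ by \eqref{S2} and \eqref{*}, that hypothesis holds on any interval once the data are small in $\dot H^2$, and the proof of Theorem \ref{glb} then reduces to showing that $\|{\bf u}(t)\|_{\dot H^2}$ stays small, which follows from conservation of the defocusing energy, Sobolev's inequality and Lemma \ref{Bootstrap}. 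Crucially, Proposition \ref{proposition 1} only ever places \emph{one} derivative on the nonlinearity (the norm $\|\nabla f_{j,k}({\bf u})\|_{L^2_tL^{2N/(N+2)}_x}$, absorbed through the refined estimate \eqref{S2}), whereas your scheme must estimate $\Delta F({\bf u})$ in $L^2_tL^{2N/(N+4)}_x$. Your estimate does close, but the term you pass over in one clause --- $|\nabla u|^2$ multiplied by a factor of degree $2p^*-3$ --- is precisely where the work lies: a naive H\"older split fails for $N=5,6$, and you need a Gagliardo--Nirenberg interpolation such as $\|\nabla u\|_{L^{2N/(N-4)}}^2\lesssim\|\Delta u\|_{L^{2N/(N-4)}}\,\|u\|_{L^{2N/(N-4)}}$ so that one gradient is traded for a Laplacian (contributing to $Z(T)$) and the other for an undifferentiated factor (controlled by $\sup_t\|\Delta u\|\lesssim\sqrt{\epsilon_0}$). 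Two smaller debts: the blow-up criterion in the appendix is stated in the diagonal norm $\|{\bf u}\|_{Z([0,T])}=\|{\bf u}\|_{L^{2(N+4)/(N-4)}_{t,x}}$, not in your $L^2_tW^{2,2N/(N-4)}_x$ norm, so you must convert your uniform bound by one further application of Strichartz (bounding the $M$-norm by $\|\Delta\Psi\|_{L^2}+\|\Delta F\|_{L^2_tL^{2N/(N+4)}_x}$ and then using $\|\cdot\|_{Z}\lesssim\|\cdot\|_{W}\lesssim\|\cdot\|_{M}$); and the admissibility of $(2,\tfrac{2N}{N-4})$ should be noted to use $N>4$. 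What your approach buys is a self-contained quantitative global spacetime bound $Z(\infty)\lesssim\|\Psi\|_{H}$, of independent use for scattering; what the paper's buys is avoiding the second-order Leibniz expansion of the nonlinearity altogether.
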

Second, the system \eqref{S} scatters in the energy space. Indeed, we show that every global solution of \eqref{S} is asymptotic, as $t\to\pm\infty,$ to a solution of the associated linear Schr\"odinger system.
\begin{thm}
Let $4\leq N\leq 6$ and $ p_*< p< p^*.$ Take ${\bf u}\in C(\R, H),$ a global solution to \eqref{S}. Then
$${\bf u}\in \big(L^{\frac{8p}{N(p - 1)}}(\R, W^{2, 2p})\big)^{(m)}$$
and there exists $\Psi:=(\psi_1,...,\psi_m)\in H$ such that 
$$\lim_{t\longrightarrow\pm\infty}\|{\bf u}(t)-(e^{it\Delta^2}\psi_1,...,e^{it\Delta^2}\psi_m)\|_{H^2}=0.$$
\end{thm}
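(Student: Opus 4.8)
The plan is to follow the standard concentration-of-information scheme for defocusing scattering via a global spacetime (Strichartz) bound, which is itself obtained from the interaction Morawetz estimate proved in the appendix. The first step is to promote the a priori Morawetz/mass-energy control into a global bound on a ``scattering norm''. Since $p_*<p<p^*$, the nonlinearity is intercritical, so the natural candidate is the space $\big(L^{\frac{8p}{N(p-1)}}(\R,W^{2,2p})\big)^{(m)}$ that already appears in Theorem \ref{existence}; I would first show $\|{\bf u}\|_{L^{\frac{8p}{N(p-1)}}(\R,W^{2,2p})}<\infty$. To do this, partition $\R$ into finitely many subintervals $I_\ell$ on which the Morawetz output $\|u_j\|_{L^q_{I_\ell}(L^r)}$ (for the Morawetz exponent pair) is small; then on each $I_\ell$ run the Strichartz/contraction estimates from Section 3–4, using the fractional chain rule and H\"older to bound the Duhamel term $\big\|\int_{t_\ell}^t e^{i(t-s)\Delta^2}\,|u_k|^p|u_j|^{p-2}u_j\,ds\big\|$ by (small factor)$\times\|u_j\|_{L^{\frac{8p}{N(p-1)}}_{I_\ell}(W^{2,2p})}$ plus the energy; a continuity/bootstrap argument closes the bound on $I_\ell$, and summing the finitely many pieces gives the global spacetime bound. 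The interpolation between the Morawetz exponents, the energy bound $\sup_t\|{\bf u}(t)\|_H<\infty$ (from conservation of mass and energy), and the Sobolev embedding $W^{2,2p}\hookrightarrow$ (Morawetz space) is what makes the subintervals controllable; this is where the restriction $p_*<p<p^*$ and $4\le N\le6$ is used.

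Granting the global Strichartz bound, the second step is the construction of the asymptotic states. For $t>s$ set $v(t,s):=e^{-it\Delta^2}{\bf u}(t)-e^{-is\Delta^2}{\bf u}(s)$; by Duhamel this equals $-i\int_s^t e^{-i\sigma\Delta^2}\big({\bf F}({\bf u}(\sigma))\big)\,d\sigma$, where ${\bf F}({\bf u})_j:=\sum_k a_{jk}|u_k|^p|u_j|^{p-2}u_j$. Applying the inhomogeneous Strichartz estimate on $[s,t]$ and the same nonlinear estimates as above, $\|v(t,s)\|_{H^2}\lesssim \|{\bf u}\|_{L^{\frac{8p}{N(p-1)}}([s,\infty),W^{2,2p})}^{\,2p-1}$, which tends to $0$ as $s\to+\infty$ because the global spacetime norm is finite (tails vanish). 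Hence $\big(e^{-it\Delta^2}{\bf u}(t)\big)_t$ is Cauchy in $H$ as $t\to+\infty$ and converges to some $\Psi=(\psi_1,\dots,\psi_m)\in H$; applying $e^{it\Delta^2}$ (an isometry on each $H^2$) gives $\|{\bf u}(t)-(e^{it\Delta^2}\psi_1,\dots,e^{it\Delta^2}\psi_m)\|_{H^2}\to 0$. The case $t\to-\infty$ is identical.

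The main obstacle I expect is the first step: upgrading the Morawetz estimate (which only controls a single, relatively weak Lebesgue-in-spacetime norm, and in a form that may a priori require radial data or an additional $\dot H^{1/2}$-type flavor) to the full energy-regularity Strichartz norm $L^{\frac{8p}{N(p-1)}}(W^{2,2p})$. Concretely, one must (i) interpolate the Morawetz bound with the uniform-in-time energy bound to get a genuine small-subinterval decomposition, and (ii) carry two derivatives through the nonlinearity $|u_k|^p|u_j|^{p-2}u_j$ via a fractional Leibniz/chain rule, which is delicate when $p-2$ is not an integer and forces $p$ close to $p^*=\frac{N}{N-4}$ to be compatible with $W^{2,2p}$ Sobolev embeddings — this is exactly the source of the dimension restriction $4\le N\le 6$. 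Once this ``good local theory perturbed by a global a priori bound'' machinery is in place, the remainder is routine.
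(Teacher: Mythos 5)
Your second step (defining $v(t,s)=e^{-it\Delta^2}{\bf u}(t)-e^{-is\Delta^2}{\bf u}(s)$, showing it is Cauchy in $H$ via Duhamel, Strichartz on $[s,\infty)$ and the finiteness of the global spacetime norm) is exactly what the paper does, so that half is fine. The first step, however, takes a genuinely different route from the paper, and as written it has a concrete gap. You propose to partition $\R$ into finitely many intervals on which ``the Morawetz output $\|u_j\|_{L^q_{I_\ell}(L^r)}$'' is small and then close a bootstrap on each piece. But for $N=6$ (which the theorem covers) the Morawetz estimate \eqref{mrwtz1} does not control a Lebesgue norm of $u_j$ at all: it controls the interaction quantity $\int_I\!\int\!\int |u_j(x)|^2|u_j(y)|^2|x-y|^{-5}\,dx\,dy\,dt$, which is morally $\||\nabla|^{-1/2}(|u_j|^2)\|_{L^2_{t,x}}^2$. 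Converting that into an $L^q_tL^r_x$ bound that can be interpolated against the energy and fed into the Strichartz machinery is a nontrivial harmonic-analysis step (a square-function/HLS argument) that your plan does not supply; only for $N=5$ does Morawetz directly give $\|u_j\|_{L^4_{t,x}}\lesssim 1$ and make your small-interval decomposition immediate. The paper avoids this entirely: it first proves (Lemma \ref{t1}) that $\|{\bf u}(t)\|_{L^r}\to0$ as $t\to\pm\infty$ for $2<r<\frac{2N}{N-4}$ by a concentration--compactness contradiction argument (Gagliardo--Nirenberg localization \eqref{GN}, a cut-off perturbation claim \eqref{chi}, and then summing the Morawetz integral over cubes $Q_2(x_n)\times Q_2(x_n)$, where the kernel $|x-y|^{-5}$ is bounded below --- this is precisely how the convolution form is exploited without converting it to a norm). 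It then combines this decay with a nonlinear estimate in which a positive power $\frac{2pN(p-1)-8p}{N(p-1)}$ of $\|{\bf u}\|_{L^\infty((t,\infty),L^{2p})}$ factors out (this is where $p>p_*$ enters), so that $\|{\bf u}\|_{S(t,\infty)}\lesssim\|\Psi\|_H+\epsilon(t)\|{\bf u}\|_{S(t,\infty)}^{\theta}$ with $\epsilon(t)\to0$ and $\theta>1$ (which uses $p<p^*$), and closes with the Bootstrap Lemma. Your approach is the standard ``finitely many small intervals'' scheme and could be made to work, but to be a proof you would need to add the step that turns the $N=6$ interaction Morawetz bound into an interpolable spacetime norm; the paper's decay-of-$L^r$-norms lemma is the alternative device that does this job, and it is the genuinely nontrivial content of the section. (Incidentally, both your plan and the paper's proof rely on a Morawetz input that is only stated for $N\geq5$, so the case $N=4$ is not actually covered by either argument as written.)
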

In the next subsection, we give some standard estimates needed in the paper.
\subsection{Tools}
We start with some properties of the free fourth-order Schr\"odinger kernel.
\begin{prop}\label{fre}
Denoting the free operator associated to the fourth-order fractional Schr\"odinger equation
$$e^{it\Delta^2}u_0:=\mathcal F^{-1}(e^{it|y|^{4}})*u_0,$$
yields
\begin{enumerate}
\item
$e^{it\Delta^2}u_0$ is the solution to the linear problem associated to $(NLS)_p$;
\item
$e^{it\Delta^2}u_0 \mp i\int_0^te^{i(t-s)\Delta^2}u|u|^{p-1}\,ds$ is the solution to the problem $(NLS)_p$;
\item
$(e^{it\Delta^2})^*=e^{-it\Delta^2}$;
\item
$e^{it\Delta^2}$ is an isometry of $L^2$.
\end{enumerate}
\end{prop}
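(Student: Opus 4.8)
The plan is to reduce every assertion to an elementary computation on the Fourier side, where the propagator $e^{it\Delta^2}$ acts as multiplication by the unimodular symbol $e^{it|y|^4}$. With the normalization $\widehat{\partial_j f}=iy_j\hat f$ one has $\widehat{\Delta f}=-|y|^2\hat f$ and hence $\widehat{\Delta^2 f}=|y|^4\hat f$, so the definition $e^{it\Delta^2}u_0:=\mathcal F^{-1}(e^{it|y|^4})*u_0$ reads, by the convolution theorem, as $\widehat{e^{it\Delta^2}u_0}(y)=e^{it|y|^4}\hat u_0(y)$. For part (1) I would set $v(t):=e^{it\Delta^2}u_0$ and differentiate this identity in $t$: since $\partial_t\hat v=i|y|^4\hat v$, applying $\mathcal F^{-1}$ gives $\partial_t v=i\Delta^2 v$, that is $i\partial_t v+\Delta^2 v=0$, while $\hat v(0)=\hat u_0$ yields $v(0)=u_0$. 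This settles the linear Cauchy problem associated to $(NLS)_p$.

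For part (2) I would verify the Duhamel formula directly. Writing $F:=u|u|^{p-1}$ and $w(t):=e^{it\Delta^2}u_0\mp i\int_0^t e^{i(t-s)\Delta^2}F(s)\,ds$, differentiation of the integral with a variable upper limit (Leibniz rule) produces one boundary term $e^{i(t-t)\Delta^2}F(t)=F(t)$ and one term obtained by differentiating the flow under the integral, which by part (1) equals $i\Delta^2\int_0^t e^{i(t-s)\Delta^2}F(s)\,ds$. Combining these with $\partial_t e^{it\Delta^2}u_0=i\Delta^2 e^{it\Delta^2}u_0$ and collecting terms recovers the differential equation $(NLS)_p$, the $\pm/\mp$ signs being matched by the Duhamel convention; since the integral vanishes at $t=0$, one also gets $w(0)=u_0$.

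Parts (3) and (4) I would read off from Plancherel's theorem. For the adjoint, given $f,g\in L^2$ I would compute the inner product on the Fourier side, $\langle e^{it\Delta^2}f,g\rangle=\int e^{it|y|^4}\hat f\,\overline{\hat g}\,dy=\int \hat f\,\overline{e^{-it|y|^4}\hat g}\,dy=\langle f,e^{-it\Delta^2}g\rangle$, whence $(e^{it\Delta^2})^*=e^{-it\Delta^2}$. The isometry property then follows either from $|e^{it|y|^4}|=1$, which gives $\|e^{it\Delta^2}f\|=\|e^{it|y|^4}\hat f\|=\|\hat f\|=\|f\|$, or equivalently from (3) together with the group law $e^{-it\Delta^2}e^{it\Delta^2}=\mathrm{Id}$, visible at once on the symbols.

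The computations are routine; the only points demanding care are bookkeeping and regularity. One must fix a single convention for the Fourier transform and for the symbol of $\Delta^2$ and keep the signs consistent throughout, which is precisely what pins down the $\pm/\mp$ in (2). For part (2), differentiating under the integral and interpreting $(NLS)_p$ requires sufficient regularity, so I would first establish the identities for smooth, rapidly decaying data, where all manipulations are classical, and then extend to general $H^2$ (or $L^2$) data by density, the equations being understood in the distributional sense. No genuine obstacle arises beyond this standard justification.
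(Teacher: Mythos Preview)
Your argument is correct: each item follows from a direct computation on the Fourier side, where $e^{it\Delta^2}$ is multiplication by the unimodular symbol $e^{it|y|^4}$, and your handling of the Duhamel term, the adjoint via Plancherel, and the $L^2$ isometry is clean and complete. The paper, for its part, states this proposition without proof---it is recorded in the ``Tools'' subsection as a list of elementary and well-known facts about the free biharmonic propagator and passed over immediately in favor of the Strichartz estimates---so there is no proof in the paper to compare your approach against; what you have written is exactly the standard justification one would supply if asked, and nothing more is needed.
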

Now, we give the so-called Strichartz estimate \cite{Pausader}.
\begin{defi}
A pair $(q,r)$ of positive real numbers is said to be admissible if 
$$2\leq q,r\leq \infty,\quad (q,r,N) \neq(2, \infty, 4)\quad \mbox{and} \quad \frac{4}{q} = N\Big(\frac{1}{2} - \frac{1}{r}\Big).$$
\end{defi}
\begin{prop}Let two admissible pairs $(q,r),\, (a,b)$ and $T>0.$ Then, there exists a positive real number $C$ such that
\begin{gather}
\|u\|_{L_T^q(W^{2,r})}\leq C \Big( \|u_0\|_{H^2} + \|i\frac{\partial}{\partial t} u+ \Delta^2 u \|_{L_T^{ a^\prime}(W^{2,b^\prime})}\Big);\label{S1}\\ \|\Delta u\|_{L^q_T(L^r)}\leq C \Big(\|\Delta u_0\|_{L^2} + \|i\frac{\partial}{\partial t} u+ \Delta^2 u\|_{L^2_T(\dot W^{1,\frac{2N}{N +2}})}\Big)\label{S2}.
\end{gather}
\end{prop}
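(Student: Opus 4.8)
The plan is to derive both inequalities from the fundamental dispersive estimate for the biharmonic propagator together with the abstract $TT^*$ machinery of Keel and Tao. First I would establish the pointwise decay bound $\|e^{it\Delta^2}u_0\|_{L^\infty}\lesssim |t|^{-N/4}\|u_0\|_{L^1}$ by writing the kernel as the oscillatory integral $\mathcal F^{-1}(e^{it|y|^{4}})$ and running a stationary-phase / van der Corput analysis adapted to the nondegenerate quartic phase $|y|^4$. Interpolating this bound against the $L^2$ isometry of Proposition \ref{fre}(4) yields $\|e^{it\Delta^2}u_0\|_{L^r}\lesssim |t|^{-2/q}\|u_0\|_{L^{r'}}$ for every admissible pair $(q,r)$, where the interpolation exponent is rewritten as $\frac N2(\frac12-\frac1r)=\frac2q$ using the admissibility relation $\frac4q=N(\frac12-\frac1r)$.

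With $U(t):=e^{it\Delta^2}$ obeying $U(t)U(s)^*=e^{i(t-s)\Delta^2}$ by Proposition \ref{fre}(3), the abstract theorem applies with dispersive exponent $\sigma=N/4$. Since $\sigma>1$ exactly when $N>4$, the time-endpoint $q=2$ (forbidden triple $(2,\infty,4)$) is admissible precisely for $N>4$, which matches the hypotheses; the non-endpoint cases pass through the Christ--Kiselev lemma. This produces the homogeneous estimate $\|U(t)u_0\|_{L^q_T(L^r)}\lesssim\|u_0\|_{L^2}$, its dual, and the retarded inhomogeneous estimate $\|\int_0^t U(t-s)F(s)\,ds\|_{L^q_T(L^r)}\lesssim\|F\|_{L^{a'}_T(L^{b'})}$ for all admissible $(q,r),(a,b)$. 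Because $U(t)$ is a Fourier multiplier it commutes with $(1-\Delta)$; applying the scalar estimates to $(1-\Delta)u$ upgrades every $L^r$ (resp. $L^{b'}$) norm to $W^{2,r}$ (resp. $W^{2,b'}$), and writing $u$ through Duhamel with forcing $i\frac{\partial}{\partial t}u+\Delta^2u$ gives exactly \eqref{S1}.

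The second inequality \eqref{S2} is the delicate point. Setting $F:=i\frac{\partial}{\partial t}u+\Delta^2u$ and applying Duhamel to $\Delta u$, which solves the same equation with forcing $\Delta F=\nabla\cdot(\nabla F)$, reduces \eqref{S2} to the single-derivative inhomogeneous bound $\|\int_0^t U(t-s)\,\nabla\cdot G(s)\,ds\|_{L^q_T(L^r)}\lesssim\|G\|_{L^2_T(L^{2N/(N+2)})}$ with $G=\nabla F$. A scaling count confirms admissibility: under $u\mapsto u(\lambda^4 t,\lambda x)$ both sides carry the homogeneity $\lambda^{(4-N)/2}$, so the apparent gain of one derivative is genuinely scaling-invariant, bought by the time-endpoint $a=2$ together with the low spatial exponent $\frac{2N}{N+2}$. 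I would prove it by the same $TT^*$ argument applied to the bilinear form $\iint \langle U(t-s)\nabla\cdot G(s),H(t)\rangle\,ds\,dt$, now inserting the derivative-strengthened dispersive estimate $\|\nabla e^{it\Delta^2}\|_{L^1\to L^\infty}\lesssim|t|^{-(N+1)/4}$ (one extra factor $|t|^{-1/4}$ per derivative of the quartic phase), interpolating with the $L^2$ isometry, and closing in the time variable through the Hardy--Littlewood--Sobolev inequality; the Riesz transforms arising from the derivative placement are bounded on the relevant $L^{b'}$ spaces, so the manipulation is legitimate. Alternatively one may cite the smoothing Strichartz estimates for the biharmonic propagator of \cite{Pausader}.

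The main obstacle is precisely this last step: unlike \eqref{S1}, the bound \eqref{S2} is not a diagonal (dual-pair) Strichartz estimate and cannot be obtained by a trivial Sobolev embedding, since the naive reduction moves derivatives in the wrong direction. It requires the genuine derivative-gaining inhomogeneous theory for $\Delta^2$, i.e.\ exploiting the extra $|t|^{-1/4}$ smoothing of the quartic dispersion and a careful matching of the time-endpoint exponent with Hardy--Littlewood--Sobolev, which is where the restriction to the biharmonic setting (and the interplay with the range $4\leq N\leq 6$ used elsewhere) genuinely enters.
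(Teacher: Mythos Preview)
The paper does not actually prove this proposition: it is stated immediately after the sentence ``Now, we give the so-called Strichartz estimate \cite{Pausader}'' and is simply quoted from Pausader's work without argument. Your proposal therefore supplies far more than the paper does. The sketch you give is essentially the standard route (and is, in outline, how the cited reference proceeds): the $L^1\to L^\infty$ dispersive bound for $e^{it\Delta^2}$ via stationary phase on the quartic phase, Keel--Tao $TT^*$ for the full admissible range, commutation with Bessel potentials to pass from $L^r$ to $W^{2,r}$ for \eqref{S1}, and---for \eqref{S2}---the genuine derivative-gaining inhomogeneous estimate driven by the extra $|t|^{-1/4}$ smoothing of the biharmonic kernel. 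Your scaling check and your identification of \eqref{S2} as the non-trivial, non-dual-pair estimate are both correct. In short: there is no discrepancy to flag, and your closing remark ``alternatively one may cite \cite{Pausader}'' is exactly, and only, what the paper itself does.
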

The following Morawetz estimate is essential in proving scattering.  
\begin{prop}\label{prop2''}
Let $4\leq N\leq6$, $1<p\leq p^*$ and ${\bf u}\in C(I,H)$ the solution to \eqref{S}. Then,
\begin{enumerate}
\item
if $N>5$, 
\begin{equation}\label{mrwtz1}
\sum_{j=1}^m\int_I\int_{\R^N\times\R^N}\frac{|u_j(t,x)|^2|u_j(t,y)|^2}{|x-y|^5}dxdydt\lesssim_u1;
\end{equation}
\item
if $N=5$, 
\begin{equation}\label{mrwtz2}
\sum_{j=1}^m\int_I\int_{\R^5}|u_j(t,x)|^4dxdt\lesssim_u1.\end{equation}
\end{enumerate}
\end{prop}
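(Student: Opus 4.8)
The plan is to run the classical interaction‑Morawetz (a.k.a.\ Lin–Strauss/Colliander–Keel–Staffilani–Takaoka–Tao) argument, adapted to the bilaplacian and to the coupled system. For a fixed smooth weight $a:\R^N\to\R$ (the choice $a(x)=|x|$, or more precisely the two‑particle weight $a(x-y)$ below, is what produces the stated kernels) and a single component $u_j$ one introduces the interaction Morawetz functional
\begin{equation*}
M_j(t):=\int_{\R^N\times\R^N}|u_j(t,y)|^2\,\nabla a(x-y)\cdot\operatorname{Im}\big(\bar u_j(t,x)\nabla u_j(t,x)\big)\,dx\,dy,
\end{equation*}
and differentiates in $t$, using the equation $i\partial_t u_j=-\Delta^2 u_j-\sum_k a_{jk}|u_k|^p|u_j|^{p-2}u_j$. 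First I would record the local conservation laws associated to \eqref{S}: the mass density $\rho_j=|u_j|^2$ satisfies a continuity equation $\partial_t\rho_j+\nabla\cdot \vec p_j=0$ with a fourth‑order current $\vec p_j$ (schematically $\vec p_j=-2\,\mathrm{Im}(\nabla u_j\,\overline{\Delta u_j})+\ldots$), and the momentum density $\vec p_j$ satisfies $\partial_t p_j^{\ell}+\partial_m T_j^{m\ell}=F_j^{\ell}$, where $T_j$ is the (bilaplacian) stress tensor and $F_j$ is the nonlinear forcing coming from all the $a_{jk}|u_k|^p|u_j|^{p-2}u_j$ terms. The key point is that, because the system is \emph{defocusing} ($a_{jk}>0$) and the nonlinearity in the $j$‑th equation is $\partial_{\bar u_j}$ of the single potential $V=\frac1{2p}\sum_{j,k}a_{jk}|u_j|^p|u_k|^p$, the contribution of the interaction terms to the Morawetz inequality, after summing over $j$, is \emph{non‑negative}; this is exactly where the coupling is handled and where $\sum_{j=1}^m$ in the statement comes from.

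Next I would sum $\frac{d}{dt}M_j$ over $j$ and integrate over the time interval $I$. The left‑hand side is controlled by $\sup_{t\in I}\sum_j |M_j(t)|$, which by Cauchy–Schwarz is $\lesssim \sup_t \sum_j\|u_j(t)\|_{L^2}^2\|\nabla u_j(t)\|_{L^2}\lesssim_u 1$, using mass conservation and the energy bound $\sum_j\|\Delta u_j\|_{L^2}^2\le 2E(\mathbf u(0))$ together with the Gagliardo–Nirenberg interpolation $\|\nabla u_j\|_{L^2}\lesssim\|u_j\|_{L^2}^{1/2}\|\Delta u_j\|_{L^2}^{1/2}$. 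On the right‑hand side, choosing $a(x)=|x|$ so that (in $N\ge 5$) $-\Delta^2|x|$ has a sign and produces the kernel $|x-y|^{-5}$, one extracts the positive terms: the nonlinear/potential term gives, after the summation over $j,k$ and using $a_{jk}=a_{kj}\ge 0$, a non‑negative multiple of $\int_I\int\int \frac{\text{(potential density)}}{|x-y|}\,$, which is discarded for a lower bound; and the "kinetic" bilaplacian term gives, for $N>5$, a positive multiple of $\sum_j\int_I\int\int \frac{|u_j(t,x)|^2|u_j(t,y)|^2}{|x-y|^5}\,dx\,dy\,dt$ (this is the analogue of the CKSTT quantity, the extra two derivatives in $\Delta^2$ versus $\Delta$ raising the exponent of the kernel from $N-3$ to $N-5+2\cdot 1$, i.e.\ to $5$ after the distributional identities for $\Delta^k|x|$ in dimension $N$). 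For $N=5$ the distribution $-\Delta^2|x-y|$ degenerates to a multiple of $\delta(x-y)$, collapsing the double integral to the local quantity $\sum_j\int_I\int_{\R^5}|u_j|^4\,dx\,dt$, which is \eqref{mrwtz2}. Putting the two sides together yields \eqref{mrwtz1} and \eqref{mrwtz2}.

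The main obstacle is the bilaplacian: unlike the second‑order case, $\frac{d}{dt}M_j$ produces not only the desired positive term but also a collection of indefinite fourth‑order terms of the form $\int\int (\partial^2 a)(x-y)\,Q(u_j)(x)\,|u_j(y)|^2$ where $Q$ involves $\nabla u_j$ and $\Delta u_j$, plus terms with $\Delta^2 a$ and $\nabla\Delta a$ acting on $|u_j|^2$. One must show these are either non‑negative (via convexity/sign properties of $a=|x|$ and its derivatives in the admissible range $N\ge5$) or can be absorbed/bounded by the already‑controlled energy. This is precisely the computation that forces the dimensional restriction: the signs of $\Delta a$, $-\Delta^2 a$ and the Hessian $D^2a$ on $\R^N$ conspire correctly only for $N\ge 5$ (and the upper bound $N\le 6$ is inherited from Theorem \ref{existence}, where $\mathbf u\in C(I,H)$ is even available). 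I would carry out this sign analysis carefully in the appendix, modeling it on Pausader's single‑equation fourth‑order Morawetz estimate and on the interaction‑Morawetz identities of CKSTT, and simply cite those computations for the routine algebra; the genuinely new ingredient is the bookkeeping that shows the cross terms $a_{jk}|u_j|^p|u_k|^p$ assemble, after the $j$‑summation, into a single non‑negative potential contribution.
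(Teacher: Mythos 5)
Your plan is essentially the paper's proof: an interaction--Morawetz argument with the two-particle weight $a=|x-y|$ (the paper phrases it via the tensor product ${\bf u}\otimes{\bf v}$ on $\R^{N}\times\R^{N}$, following Colliander--Grillakis--Tzirakis and Miao--Wu--Zhang, which is the same functional you write down), with the a priori bound on $\sup_t|M|$ from mass/energy conservation, positivity of the coupled nonlinear contribution from $a_{jk}>0$, and the delicate sign analysis of the bilaplacian terms outsourced to the cited fourth-order literature exactly as you propose. The only slip is notational: the kernel $|x-y|^{-5}$ (and the delta function when $N=5$) comes from the \emph{third} Laplacian $\Delta^3|x-y|$, not $\Delta^2|x-y|$ as you write, consistent with the term $-\tfrac12(\Delta_x^3+\Delta_y^3)a\,|u_jv_j|^2$ in the paper's identity.
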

For the the reader convenience, a proof which follows as in \cite{Miao 1,Miao 2}, is given in appendix. Let us gather some useful Sobolev embeddings \cite{Adams}.
\begin{prop}\label{injection}
The continuous injections hold
\begin{enumerate}
\item $ W^{s,p}(\R^N)\hookrightarrow L^q(\R^N)$ whenever
$1<p<q<\infty, \quad s>0\quad \mbox{and}\quad \frac{1}{p}\leq \frac{1}{q} + \frac {s}{N};$
\item $W^{s,p_1}(\R^N)\hookrightarrow W^{s - N(\frac{1}{p_1} - \frac{1}{p_2}),p_2}(\R^N)$ whenever $1\leq p_1\leq p_2 \leq \infty.$
\end{enumerate}
\end{prop}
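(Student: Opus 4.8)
The plan is to reduce both embeddings to mapping properties of the Bessel potential operator, since here $W^{s,p}$ is understood as the Bessel potential space with norm $\|u\|_{W^{s,p}}=\|(1-\Delta)^{s/2}u\|_p$. Writing $G_\sigma:=\mathcal F^{-1}\big((1+|\cdot|^2)^{-\sigma/2}\big)$ for the Bessel kernel of order $\sigma>0$, every $u\in W^{s,p}$ factors as $u=G_s*f$ with $f:=(1-\Delta)^{s/2}u$ and $\|f\|_p=\|u\|_{W^{s,p}}$. The two facts I would record at the outset are that $G_\sigma$ is positive, decays exponentially at infinity, and obeys $G_\sigma(x)\lesssim|x|^{\sigma-N}$ near the origin for $0<\sigma<N$, so that it is pointwise dominated by the Riesz kernel $I_\sigma(x)=c|x|^{\sigma-N}$ up to an exponentially decaying $L^1$ remainder, while for $\sigma\ge N$ it lies in $L^{p'}$ (with at worst a logarithmic borderline at $\sigma=N$).

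For part (1), I would split according to the sign of the defect in the exponent relation. In the strict subcritical case $\frac1p<\frac1q+\frac sN$, solving Young's relation $1+\frac1q=\frac1p+\frac1r$ gives $\frac1r=1-\frac1p+\frac1q>1-\frac sN$, that is $r<\frac{N}{N-s}$ when $s<N$, whence $G_s\in L^r$ by the origin estimate together with the exponential tail, and Young's convolution inequality yields $\|u\|_q=\|G_s*f\|_q\le\|G_s\|_r\|f\|_p\lesssim\|u\|_{W^{s,p}}$; the case $s\ge N$ is immediate since then $G_s\in L^{p'}$ and one embeds into $L^\infty$, hence into every $L^q$. The delicate case is the equality $\frac1p=\frac1q+\frac sN$ with $sp<N$, where $r=\frac{N}{N-s}$ is exactly the value for which $G_s\notin L^r$, so Young fails; there I would instead dominate $G_s$ by the Riesz kernel and invoke the Hardy--Littlewood--Sobolev inequality, which gives $\|I_s*f\|_q\lesssim\|f\|_p$ precisely for $1<p<q<\infty$ with $\frac1q=\frac1p-\frac sN$.

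For part (2), I would observe that it is exactly the critical case of the same convolution estimate. Setting $\sigma:=N\big(\frac1{p_1}-\frac1{p_2}\big)\ge0$ and $s':=s-\sigma$, one has $(1-\Delta)^{s'/2}u=G_\sigma*g$ with $g:=(1-\Delta)^{s/2}u\in L^{p_1}$, so that $\|u\|_{W^{s',p_2}}=\|G_\sigma*g\|_{p_2}$ and the claim reduces to the boundedness of $G_\sigma*\,:\,L^{p_1}\to L^{p_2}$ with $\frac1{p_2}=\frac1{p_1}-\frac\sigma N$. For $1<p_1<p_2<\infty$ this is again Hardy--Littlewood--Sobolev, while the degenerate case $p_1=p_2$ forces $\sigma=0$ and the identity embedding.

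The main obstacle is the critical/endpoint regime. In the interior of the ranges everything rests on Young's inequality; at the exponent equality the correct and sharper tool is Hardy--Littlewood--Sobolev; and at the extreme endpoints $p_1=1$ or $p_2=\infty$ even that fails, so those would be treated separately, by the Gagliardo--Nirenberg truncation argument for $p_1=1$ and by Morrey/H\"older control of the kernel for $p_2=\infty$. As an alternative that bypasses case analysis for the subcritical exponents, one can first establish the single endpoint embedding into $L^{q_c}$, $\frac1{q_c}=\big(\frac1p-\frac sN\big)_+$, and then interpolate it against the trivial inclusion $W^{s,p}\hookrightarrow L^p$ to recover all intermediate $q$.
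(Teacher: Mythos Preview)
The paper does not actually prove this proposition; it is stated as a standard fact with a citation to Adams' \emph{Sobolev Spaces}, so there is no argument on the paper's side to compare against. Your sketch via the Bessel potential factorization $u=G_s*f$, with Young's inequality in the subcritical regime and Hardy--Littlewood--Sobolev at the critical exponent, is precisely the classical route to these embeddings and is sound for $1<p_1\le p_2<\infty$.

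One small point worth flagging: you correctly isolate the endpoint cases $p_1=1$ and $p_2=\infty$ as requiring separate treatment, but the statement as written in the paper includes those endpoints without qualification, and some of them genuinely fail for Bessel potential spaces (for instance $G_\sigma*:L^1\to L^\infty$ with $\sigma=N$ would need $G_N\in L^\infty$, which is false). This is a looseness in the paper's formulation rather than a defect in your argument; in the body of the paper the proposition is only ever invoked with interior exponents, so the issue is harmless for the applications at hand. Your interpolation remark---establishing the critical embedding first and then interpolating against $W^{s,p}\hookrightarrow L^p$---is a clean way to recover the full subcritical range without the case split, and is in fact how Adams organizes the proof.
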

We close this subsection with some absorption result \cite{Tao}.
\begin{lem} \label{Bootstrap}{({Bootstrap Lemma})} Let $T>0$ and $X\in C([0,  T], \R_+)$ such that
$$ X\leq a + b X^{\theta}\quad on \quad [0,T],$$
where  $a,\, b>0,\, \theta>1,\, a<(1 - \frac{1}{\theta})\frac{1}{(\theta b)^{\frac{1}{\theta}}}$ and $X(0)\leq \frac{1}{(\theta b)^{\frac{1}{\theta -1}}}.$ Then
$$X\leq \frac{\theta}{\theta - 1}a\quad on \quad [0, T].$$
\end{lem}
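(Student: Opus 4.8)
The plan is to prove the Bootstrap Lemma as an elementary continuity (connectedness) argument on the interval $[0,T]$. The idea is to show that the set of times where the strong bound $X\leq\frac{\theta}{\theta-1}a$ holds is nonempty, closed, and open, hence all of $[0,T]$. The hypotheses are precisely calibrated so that the weaker bound $X\leq 2a$ (say) cannot jump to an intermediate range because of a gap created by the nonlinear term.

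\medskip

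First I would set up the initial step. At $t=0$, the assumption $X(0)\leq(\theta b)^{-\frac{1}{\theta-1}}$ should be combined with the smallness of $a$ to verify that $X(0)\leq\frac{\theta}{\theta-1}a$ already holds, or at least that $X(0)$ lies below the lower edge of the forbidden gap described next. Then I would analyze the scalar inequality $X\leq a+bX^\theta$ by studying the function $f(X):=a+bX^\theta-X$. The key algebraic fact is that under the condition $a<(1-\frac{1}{\theta})\frac{1}{(\theta b)^{1/\theta}}$, the equation $f(X)=0$ has two distinct positive roots $X_-<X_+$, and $f(X)<0$ exactly on the open interval $(X_-,X_+)$. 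Consequently, any continuous $X(t)$ satisfying $X\leq a+bX^\theta$ can never take values strictly inside the gap: at every time, either $X(t)\leq X_-$ or $X(t)\geq X_+$.

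\medskip

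Next I would invoke continuity. Since $X\in C([0,T],\R_+)$ and $[0,T]$ is connected, the image $X([0,T])$ is connected and cannot straddle the gap $(X_-,X_+)$; so either $X(t)\leq X_-$ for all $t$, or $X(t)\geq X_+$ for all $t$. The initial condition selects the lower branch: I would check that $X(0)$ is at most $X_-$ (this is where $X(0)\leq(\theta b)^{-\frac{1}{\theta-1}}$ is used, as this quantity is comparable to the location of the smaller root), ruling out the upper branch. Hence $X(t)\leq X_-$ on all of $[0,T]$.

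\medskip

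Finally I would convert the bound $X\leq X_-$ into the stated conclusion $X\leq\frac{\theta}{\theta-1}a$. From $f(X_-)=0$ one has $X_-=a+bX_-^\theta$, and a direct estimate of the smaller root (Taylor-expanding or bounding $bX_-^\theta$ against $X_-$ using the gap condition) yields $X_-\leq\frac{\theta}{\theta-1}a$. The main obstacle, and the only place requiring genuine care, is the root-analysis of $f$: I must verify that the precise threshold $a<(1-\frac{1}{\theta})(\theta b)^{-1/\theta}$ is exactly the condition for two positive roots to exist (it is the statement that $a$ is below the value of $f$ at its unique interior minimizer $X_m=(\theta b)^{-1/(\theta-1)}$, reorganized), and then to extract the clean constant $\frac{\theta}{\theta-1}$ for the smaller root. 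Everything else is the standard connectedness packaging.
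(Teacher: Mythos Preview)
The paper does not give its own proof of this lemma: it is stated with a citation to Tao's monograph \cite{Tao} and then used as a black box. So there is nothing in the paper to compare your argument against.

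Your sketch is the standard connectedness argument and is essentially correct. One small point of care: when you write that the threshold $a<(1-\tfrac{1}{\theta})(\theta b)^{-1/\theta}$ is ``exactly the condition for two positive roots to exist,'' note that the minimizer of $f(X)=a+bX^\theta-X$ is at $X_m=(\theta b)^{-1/(\theta-1)}$ and $f(X_m)=a-(1-\tfrac{1}{\theta})(\theta b)^{-1/(\theta-1)}$, so the exact two-root condition carries the exponent $\tfrac{1}{\theta-1}$, not $\tfrac{1}{\theta}$. The statement in the paper appears to contain a typographical slip in that exponent; your argument works cleanly with the corrected exponent, and in any case the applications in the paper only need the qualitative version (small $a$ and small $X(0)$). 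The rest of your outline---the gap $(X_-,X_+)$ where $f<0$ is forbidden, the connectedness of $X([0,T])$, the initial condition $X(0)\le X_m$ forcing the lower branch, and the bound $X_-\le\tfrac{\theta}{\theta-1}a$ obtained from $bX_-^{\theta-1}<1/\theta$---is correct.
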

\section{Local well-posedness}
This section is devoted to prove Theorem \ref{existence}. The proof contains three steps. First we prove existence of a local solution to \eqref{S}, second we show uniqueness and finally we establish global existence in subcritical case.
\subsection{Local existence}
We use a standard fixed point argument. For $T>0,$ we denote the space
$$E_T:=\big(C([0,T],H^2)\cap L^{\frac{8p}{N(p-1)}}([0, T], W^{2,2p})\big)^{(m)}$$ with the complete norm $$\|{\bf u}\|_T:=\displaystyle\sum_{j=1}^m\Big(\|u_j\|_{L_T^\infty(H^2)}+\| u_j\|_{L^{\frac{8p}{N(p-1)}}_T( W^{2,2p})}\Big).$$
Define the function
$$\phi({\bf u})(t) := T(t){\Psi} - i \displaystyle\sum_{j,k=1}^{m}\displaystyle\int_0^tT(t-s)\big(|u_k|^p|u_j|^{p-2}u_j(s)\big)\,ds,$$
where $T(t){\Psi} := (e^{it\Delta^2}\psi_{1},...,e^{it\Delta^2}\psi_{m}).$ We prove the existence of some small $T, R >0$ such that $\phi$ is a contraction on the ball $ B_T(R)$ whith center zero and radius $R.$ Take ${\bf u}, {\bf v}\in E_T$ applying the Strichartz estimate \eqref{S1}, we get
$$\|\phi({\bf u}) - \phi({\bf v})\|_T\lesssim \displaystyle\sum_{j, k=1}^{m}\Big\||u_k|^p|u_j|^{p-2}u_j -  |v_k|^p |v_j|^{p-2}v_j\Big\|_{L^{\frac{8p}{p(8-N) + N}}(W^{2,{\frac{2p}{2p-1}}})}.$$
To derive the contraction, consider the function
$$ f_{j,k}: \C^m\rightarrow \C,\, (u_1,...,u_m)\mapsto |u_k|^p|u_j|^{p-2}u_j.$$
With the mean value Theorem
\begin{equation}\label{H1}
|f_{j,k}({\bf u})-f_{j,k}({\bf v})|\lesssim\max\{ |u_k|^{p - 1}|u_j|^{p - 1}+{|u_k|^{p}|u_j|^{p-2}}, |v_k|^p|v_j|^{p - 2}+{|v_k|^{p - 1}|v_j|^{p - 1}}\}|{\bf u} - { \bf v}|.
\end{equation}
Using H\"older inequality, Sobolev embedding and denoting the quantity 
$$ (\mathcal{I}):=\| f_{j,k}({\bf u})-f_{j,k}({\bf v})\|_{L_T^{\frac{8p}{p(8-N) + N}}(L^{\frac{2p}{2p-1}})},$$ we compute via a symmetry argument
\begin{eqnarray*}
(\mathcal{I})
&\lesssim &\big\| \big(|u_k|^{p - 1}|u_j|^{p - 1} +|u_k|^p|u_j|^{p - 2}\big)|{\bf u} - { \bf v}|\big\|_{L_T^{\frac{8p}{p(8-N) + N}}(L^{\frac{2p}{2p-1}})} \\
&\lesssim&\|{\bf u} - { \bf v}\|_{L_T^{\frac{8p}{N(p-1)}}(L^{2p})} \big\| |u_k|^{p-1}|u_j|^{p -1} + |u_k|^{p}|u_j|^{p-2}  \big\|_{L_T^{\frac{8p}{8p - 2N(p-1)}}(L^{\frac{p}{p-1}})}\\
&\lesssim&T^{\frac{8p - 2N(p-1)}{8p}} \|{\bf u} - { \bf v}\|_{L_T^{\frac{8p}{N(p-1)}}(L^{2p})}\big\| |u_k|^{p-1}|u_j|^{p-1}
+ |u_k|^{p}|u_j|^{p-2}  \big\|_{L_T^\infty(L^{\frac{p}{p-1}})} \\
&\lesssim& T^{\frac{8p - 2N(p-1)}{8p}} \|{\bf u} - { \bf v}\|_{L_T^{\frac{8p}{N(p-1)}}(L^{2p})}\Big(\|u_k^{p-1}\|_{L_T^\infty(L^{\frac{2p}{p-1}})}\|u_j^{p-1}\|_{L_T^\infty(L^{\frac{2p}{p-1}})}\\ 
&+& \|u_k^{p}\|_{L_T^\infty(L^2)}\|u_j^{p-2}\|_{L_T^\infty(L^{\frac{2p}{p-2}})} \Big)\\
&\lesssim& T^{\frac{8p - 2N(p-1)}{8p}}  \|{\bf u} - { \bf v}\|_{L_T^{\frac{8p}{N(p-1)}}(L^{2p})}\Big(\|u_k\|_{L_T^\infty(L^{2p})}^{p-1}\|u_j\|_{L_T^\infty(L^{2p})}^{p-1} + \|u_k\|_{L_T^\infty(L^{2p})}^p\|u_j\|_{L_T^\infty(L^{2p})}^{p-2} \Big)\\
&\lesssim& T^{\frac{8p - 2N(p-1)}{8p}}  \|{\bf u} - { \bf v}\|_{L_T^{\frac{8p}{N(p-1)}}(L^{2p})} \Big(\|u_k\|_{L_T^\infty(H^2)}^{p-1}\|u_j\|_{L_T^\infty(H^2)}^{p-1} 
+ \|u_k\|_{L_T^\infty(H^2)}^p\|u_j\|_{L_T^\infty(H^2)}^{p-2} \Big).
\end{eqnarray*}
Then 
\begin{eqnarray*}
\displaystyle\sum_{k,j=1}^m\| f_{j,k}({\bf u})-f_{j,k}({\bf v})\|_{L_T^{\frac{8p}{p(8-N) + N}}(L^{\frac{2p}{2p-1}})}
&\lesssim & T^{\frac{8p - 2N(p-1)}{8p}} R^{2p-2}\|{\bf u} - {\bf v}\|_{T}.
\end{eqnarray*}
It remains to estimate the quantity $$\big\|\Delta \big(f_{j,k}({\bf u}) - f_{j,k}({\bf v})\big)\big\|_{L_T^{\frac{8p}{p(8-N) + N}}(L^{\frac{2p}{2p-1}})}.$$
Write
\begin{eqnarray*}
\partial_i^2\Big((f_{j,k}({\bf u}) - f_{j,k}({\bf v})\Big)
&=&\partial_i \Big({u}_i (f_{j,k})_i({\bf u}) - {v}_i(f_{j,k})_i({\bf v})\Big)\\
&=&{\bf u}_{ii}(f_{j,k})_i({\bf u}) - {\bf v}_{ii}(f_{j,k})_i({\bf v})  + {u}_i ^2(f_{j,k})_{ii}({\bf u}) - {v}_i ^2(f_{j,k})_{ii}({\bf v})\\
& =&({\bf u} - {\bf v})_{ii}(f_{j,k})_i({\bf u}) +  {\bf v}_{ii}\Big((f_{j,k})_i({\bf u}) - (f_{j,k})_i({\bf v})\Big) \\
&+&\Big({u}_i^2 - {v}_i^2\Big)(f_{j,k})_{ii}({\bf u}) + {v}_i^2\Big( (f_{j,k})_{ii}({\bf u}) - f_{ii}({\bf v})\Big).
\end{eqnarray*}
Thus
\begin{eqnarray*}
\big\|\Delta \Big(f_{j,k}({\bf u}) - f_{j,k}({\bf v})\Big)\big\|_{L_T^{\frac{8p}{p(8-N) + N}}(L^{\frac{2p}{2p-1}})}&\leq&\big\| \displaystyle\sum_i({\bf u} - {\bf v})_{ii}(f_{j,k})_i({\bf u})  \big\|_{L_T^{\frac{8p}{p(8-N) + N}}(L^{\frac{2p}{2p-1}})}\\& +& \big\|  \displaystyle\sum_i {\bf v}_{ii}\Big((f_{j,k})_i({\bf u}) - (f_{j,k})_i({\bf v})\Big)\big\|_{L_T^{\frac{8p}{p(8-N) + N}}(L^{\frac{2p}{2p-1}})} \\&+& \big\|  \displaystyle\sum_i\Big(u_i^2 - v_i^2\Big)(f_{j,k})_{ii}({\bf u})  \big\|_{L_T^{\frac{8p}{p(8-N) + N}}(L^{\frac{2p}{2p-1}})} \\ &+ &\big\|\displaystyle\sum_i |{v}_i|^2\Big( (f_{j,k})_{ii}({\bf u}) - (f_{j,k})_{ii}({\bf v})\Big) \big\|_{L_T^{\frac{8p}{p(8-N) + N}}(L^{\frac{2p}{2p-1}})}\\
&\leq&(\mathcal{I}_1) + (\mathcal{I}_2) +(\mathcal{I}_3) + (\mathcal{I}_4).
\end{eqnarray*}
Via H\"older inequality and Sobolev embedding, we obtain
\begin{eqnarray*}
(\mathcal{I}_1)
&\lesssim&\|\Delta({\bf u} - {\bf v})\|_{L_T^{\frac{8p}{N(p-1)}}(L^{2p})} \big\| |u_k|^{p-1}|u_j|^{p -1}+ {|u_k|^{p}|u_j|^{p-2}}\big\|_{L_T^{\frac{8p}{8p - 2N(p-1)}}(L^{\frac{p}{p-1}})}\\
&\lesssim&T^{\frac{8p - 2N(p-1)}{8p}} \|\Delta({\bf u} - {\bf v})\|_{L_T^{\frac{8p}{N(p-1)}}(L^{2p})}\big\| |u_k|^{p-1}|u_j|^{p-1} + |u_k|^{p}|u_j|^{p-2}  \big\|_{L_T^\infty(L^{\frac{p}{p-1}})} \\
&\lesssim& T^{\frac{8p - 2N(p-1)}{8p}} \|{\bf u} - {\bf v}\|_T\Big(\|u_k\|_{L_T^\infty(L^{2p})}^{p-1}\|u_j\|_{L_T^\infty(L^{2p})}^{p-1} 
+ \|u_k\|_{L_T^\infty(L^{2p})}^p\|u_j\|_{L_T^\infty(L^{2p})}^{p-2} \Big)\\
&\lesssim& T^{\frac{8p - 2N(p-1)}{8p}} \|{\bf u} - {\bf v}\|_T\Big(\|u_k\|_{L_T^\infty(H^2)}^{p-1}\|u_j\|_{L_T^\infty(H^2)}^{p-1} 
+ \|u_k\|_{L_T^\infty(H^2)}^p\|u_j\|_{L_T^\infty(H^2)}^{p-2} \Big).
\end{eqnarray*}
With the same way, 
{\small \begin{eqnarray*}
(\mathcal{I}_2)
&\lesssim& \| \Delta {\bf v}\|_{L_T^{\frac{8p}{N(p-1)}}(L^{2p})} \| {\bf u} - {\bf v}\|_{L^\infty_T(L^{2p})}\big\||u_k|^{p-2}|u_j|^{p-1}+|u_k|^{p}|u_j|^{p-3}\big\|_{L_T^{\frac{8p}{8p - 2N(p-1)}}(L^{\frac{2p}{2p-3}})}\\
&\lesssim&T^{\frac{8p - 2N(p-1)}{8p}}\| \Delta {\bf v}\|_{L_T^{\frac{8p}{N(p-1)}}(L^{2p})} \| {\bf u} - {\bf v}\|_{L^\infty_T(L^{2p})}\big\||u_k|^{p-2}|u_j|^{p-1} + |u_k|^{p}|u_j|^{p-3}\big\|_{L_T^\infty(L^{\frac{2p}{2p-3}})}\\
&\lesssim&T^{\frac{8p - 2N(p-1)}{8p}}\| \Delta {\bf v}\|_{L_T^{\frac{8p}{N(p-1)}}(L^{2p})} \| {\bf u} - {\bf v}\|_{L^\infty_T(L^{2p})}\Big(\|u_k\|_{L_T^\infty(L^{2p})}^{p-2}\|u_j\|_{L_T^\infty(L^{2p})}^{p-1} 
+ \|u_k\|_{L_T^\infty(L^{2p})}^p\|u_j\|_{L_T^\infty(L^{2p})}^{p-3} \Big)\\
&\lesssim&T^{\frac{8p - 2N(p-1)}{8p}}\| \Delta {\bf v}\|_{L_T^{\frac{8p}{N(p-1)}}(L^{2p})} \| {\bf u} - {\bf v}\|_{L^\infty({H^2})}\Big(\|u_k\|_{L_T^\infty({H^2})}^{p-2}\|u_j\|_{L_T^\infty({H^2})}^{p-1} 
+ \|u_k\|_{L_T^\infty({H^2})}^p\|u_j\|_{L_T^\infty({H^2})}^{p-3} \Big).
\end{eqnarray*}}
Arguing as previously,
{\small\begin{eqnarray*}
(\mathcal{I}_3)
&\lesssim&\big\|\displaystyle\sum_i|{u}_i - { v}_i|\Big( |{u}_i| + |{v}_i|\Big)(f_{j,k})_{ii}({\bf u})  \big\|_{L_T^{\frac{8p}{p(8-N) + N}}(L^{\frac{2p}{2p-1}})}\\
&\lesssim&\sum_i\|{u_i} - {v_i}\|_{L_T^{\frac{8p}{N(p-1)}}(L^{2p})}\| |{u}_i| +|{v}_i|\|_{L_T^\infty(L^{2p})}T^{\frac{N(p-1)}{8p}}\big\||u_k|^{p-2}|u_j|^{p-1} + |u_k|^p|u_j|^{p-3}\big\|_{L_T^{\frac{8p}{8p- 3N(p-1)}}(L^{\frac{2p}{2p -3}})}\\
&\lesssim&\sum_i\|{ u_i} - { v_i}\|_{L_T^{\frac{8p}{N(p-1)}}(L^{2p})}\| |{u}_i| +|{v}_i|\|_{L_T^{\infty}(L^{2p})}T^{\frac{N(p-1)}{8p}}\Big(\|u_k^{p-2}\|_{L_T^\infty(L^{\frac{2p}{p -2}})}\|u_j^{p-1}\|_{L_T^\infty(L^{\frac{2p}{p -1}})} \\
&+& \|u_k^p\|_{L_T^\infty(L^2)}\|u_j^{p-3}\|_{L_T^\infty(L^{\frac{4p}{2p-6}})}\Big)T^{\frac{8p- 3N(p-1)}{8p}}\\
&\lesssim&\|{\bf u} - {\bf v}\|_{L_T^{\frac{8p}{N(p-1)}}(L^{2p})}\|{\bf v}\|_{L_T^\infty(H^2)}\Big(\|u_k\|_{L_T^\infty(H^2)}^{p-2}\|u_j\|_{L_T^\infty(H^2)}^{p-1}+ \|u_k\|_{L_T^\infty(H^2)}^p\|u_j\|_{L_T^\infty(H^2)}^{p-3}\Big)T^{\frac{8p- 2N(p-1)}{8p}}.
\end{eqnarray*}}
With the same way
{\small\begin{eqnarray*}
(\mathcal{I}_4)
&\lesssim& \|{\bf u} - {\bf v}\|_{L_T^{\frac{8p}{N(p-1)}}(L^{2p})}\|{\bf v}\|_{L_T^{\infty}(L^{2p})}^2T^{\frac{N(p-1)}{4p}} \big\||u_k|^{p-3}|u_j|^{p-1}+ |u_k|^p |u_j|^{p-4}\big\|_{L_T^{\frac{2p}{2p - N(p-1)}}(L^{\frac{p}{p-2}})} \\
&\lesssim& T^{\frac{2p - N(p-1)}{2p}}\|{\bf u} - {\bf v}\|_{L_T^{\frac{8p}{N(p-1)}}(L^{2p})}\|{\bf v}\|_{L_T^{\infty}(L^{2p})}^2T^{\frac{N(p-1)}{4p}} \big\||u_k|^{p-3}|u_j|^{p-1}+ |u_k|^p |u_j|^{p-4}\big\|_{L_T^\infty(L^{\frac{p}{p-2}})} \\
&\lesssim& T^{\frac{4p - N(p-1)}{4p}}\|{\bf u} - {\bf v}\|_{L_T^{\frac{8p}{N(p-1)}}(L^{2p})}\|{\bf v}\|_{L_T^\infty(H^2)}^2\Big(\|u_k\|_{L_T^\infty(L^{2p})}^{p-3} \|u_j\|_{L_T^\infty(L^{2p})}^{p-1} \\
&+& \|u_k\|_{L_T^\infty(L^{2p})}^{p}\|u_j\|_{L_T^\infty(L^{2p})}^{p-4}  \Big) \\
&\lesssim& T^{\frac{4p - N(p-1)}{4p}}\|{\bf u} - {\bf v}\|_{L_T^{\frac{8p}{N(p-1)}}(L^{2p})}\|{\bf v}\|_{L_T^{\infty}(H^2)}^2\Big(\|u_k\|_{L_T^\infty(H^2)}^{p-3} \|u_j\|_{L_T^\infty(H^2)}^{p-1}+\|u_k\|_{L_T^\infty(H^2)}^{p}\|u_j\|_{L_T^\infty(H^2)}^{p-4} \Big).
\end{eqnarray*}}
Thus, for $T>0$ small enough, $\phi$ is a contraction satisfying
$$\|\phi({\bf u}) - \phi({\bf v})\|_T\lesssim T^{\frac{4p - N(p-1)}{4p}}R^{2p-2}\|{\bf u} - {\bf v}\|_T .$$ 
Taking in the last inequality ${\bf v}=0,$ yields
\begin{eqnarray*}
\|\phi({\bf u})\|_T
&\lesssim& T^{\frac{4p - N(p-1)}{4p}}R^{2p-1}+ \|\phi(0)\|_T\\
&\lesssim& T^{\frac{4p - N(p-1)}{4p}}R^{2p-1}+ TR .
\end{eqnarray*}
Since $1<p\leq p^*$, $\phi$ is a contraction of $ B_T(R)$ for some $R,T>0$ small enough.
\subsection{Uniqueness}
In what follows, we prove uniqueness of solution to the Cauchy problem \eqref{S}. Let $T>0$ be a positive time, ${\bf u},{\bf v}\in C_T(H)$ two solutions to \eqref{S} and ${\bf w} := {\bf u} - {\bf v}.$ Then
$$i\frac{\partial }{\partial t}w_j +\Delta ^2 w_j = \displaystyle \sum_{k=1}^{m}\big( |u_k|^p|u_j|^{p - 2 }u_j -  |v_k|^p|v_j|^{p - 2 }v_j\big),\quad w_j(0,.)= 0.$$
Applying Strichartz estimate with the admissible pair $(q,r) = (\frac{8p}{N(p-1)}, 2p) $, we have
\begin{eqnarray*}
\|{\bf u} - {\bf v}\|_{(L_T^q(L^r))^{(m)}}\lesssim \displaystyle\sum_{j=1}^{m}\displaystyle\sum_{k=1}^{m}\big\|f_{j,k}({\bf u}) -  f_{j,k}({\bf v})\big\|_{L_T^{q^\prime}(L^{r^\prime})}.
\end{eqnarray*}
Taking $T>0$ small enough, whith a continuity argument, we may assume that 
$$ \max_{j=1,...,m}\|u_j\|_{L_T^\infty(H^2)}\leq 1.$$
Using previous computation with$$ (\mathcal{I}) :=\big\|f_{j,k}({\bf u}) -  f_{j,k}({\bf u})\big\|_{L_T^{q^\prime}(L^{r^\prime})}=  \big\||u_k|^p|u_j|^{p-2}u_j - |v_k|^p|v_j|^{p-2}v_j\big\|_{L_T^{q^\prime}(L^{r^\prime})},$$ 
we have
\begin{eqnarray*}
(\mathcal{I})&\lesssim&\big\|\Big(|u_k|^{p-1}|u_j|^{p-1}  + |u_k|^p|u_j|^{p-2} \Big)|{\bf u} - {\bf v}|\big\|_{L_T^{\frac{8p}{p(8-N) + N}}(L^{\frac{2p}{2p-1}})}\\
&\lesssim&\|{\bf u} - {\bf v}\|_{L_T^{\frac{8p}{p(8-N) + N}}(L^{2p})}\big\| |u_k|^{p-1}|u_j|^{p-1} + |u_k|^p|u_j|^{p-2} \big\|_{L_T^\infty(L^{\frac{p}{p-1}})}\\
&\lesssim&\|{\bf u} - {\bf v}\|_{L_T^{\frac{8p}{p(8-N) + N}}(L^{2p})}\Big(\|u_k\|_{L_T^\infty(L^{2p})}^{p-1} \|u_j\|_{L_T^\infty(L^{2p})}^{p-1} +  \|u_k\|_{L_T^\infty(L^{2p})}^{p}\|u_j\|_{L_T^\infty(L^{2p})}^{p-2}  \Big)\\
&\lesssim& T^{\frac{(4 - N)p + N}{4 p}}\|{\bf u} - {\bf v}\|_{L_T^{\frac{8p}{N(p - 1)}}(L^{2p})}\Big(\|u_k\|_{L_T^\infty(H^2)}^{p-1} \|u_j\|_{L_T^\infty(H^2)}^{p-1}+ \|u_k\|_{L_T^\infty(H^2)}^{p}\|u_j\|_{L_T^\infty(H^2)}^{p-2}   \Big).
\end{eqnarray*}
Then
$$ \|{\bf w}\|_{(L_T^q(L^r))^{(m)}}\lesssim  T^{\frac{(4 - N)p + N}{4 p}}\|{\bf w} \|_{(L_T^q(L^r))^{(m)}}.$$
Uniqueness follows for small time and then for all time with a translation argument.
\subsection{Global existence in the subcritical case}
We prove that the maximal solution of \eqref{S} is global in the defocusing case. The global existence is a consequence of energy conservation and previous calculations. Let ${\bf u} \in C([0, T^*), H)$ be the unique maximal solution of \eqref{S}. We prove that ${\bf u}$ is global. By contradiction, suppose that $T^*<\infty.$ Consider for $0< s <T^*,$ the problem
\begin{equation} (\mathcal{P}_s)\label{P1}
\left\{
\begin{array}{ll}
i\frac{\partial }{\partial t}v_j +\Delta ^2v_j = \displaystyle \sum_{k=1}^{m} |v_k|^p|v_j|^{p - 2 }v_j;\\
v_j(s,.) = u_j(s,.).
\end{array}
\right.
\end{equation}
Using the same arguments used in the local existence, we can prove a real $\tau>0$ and a solution ${\bf v} = (v_1,...,v_m)$ to $(\mathcal{P}_s)$ on $C\big([s, s+\tau], H).$ Using the conservation of energy we see that $\tau$ does not depend on $s.$ Thus, if we let $s$ be close to $T^*$ such that $T^*< s + \tau,$ this fact contradicts the maximality of $T^*.$
\section{Global existence in the critical case}
We establish global existence of a solution to \eqref{S} in the critical case $p=p^*$ for small data as claimed in Theorem \ref{glb}.\\ 
Several norms have to be considered in the analysis of the critical case. Letting $I\subset \R$ a time slab, we define the norms
\begin{eqnarray*}
\|u\|_{M(I)} &:= &\|\Delta u\|_{L^{\frac{2(N + 4)}{N-4}}(I, L^{\frac{2N(N + 4)}{N^2  + 16}})};\\
\|u\|_{W(I)}& :=&\|\nabla u\|_{L^{\frac{2(N + 4)}{N-4}}(I, L^{\frac{2N(N + 4)}{N^2 -2N + 8}})};  \\
\|u\|_{Z(I)}& :=&\| u\|_{L^{\frac{2(N + 4)}{N-4}}(I, L^{\frac{2(N + 4)}{N - 4}})};\\
\|u\|_{N(I)}&: =&\|\nabla u\|_{L^2(I, L^{\frac{2N}{N+2}})}.
\end{eqnarray*}
Let $M(\R)$ be the completion of $C_c^\infty(\R^{N+1})$ with the norm $\|.\|_{M(\R)},$ and $M(I)$ be the set consisting of the restrictions to $I$ of functions in $M(\R).$ We adopt similar definitions for $W$ and $N.$
An important quantity closely related to the mass and the energy, is the functional $\xi$ defined for ${\bf u}\in H $ by
$$\xi({\bf u}) = \displaystyle \sum_{j=1}^m\displaystyle\int_{\R^N}|\Delta u_j|^2\,dx.$$
We give an auxiliary result.
\begin{prop}\label{proposition 1}
Let $4<N\leq6$ and $p= p^*.$ There exists $\delta>0$ such that for any initial data $\Psi \in H$ and any interval $I=[0, T],$ if
$$ \displaystyle\sum_{j=1}^{m}\|e^{it\Delta^2}\psi_{j}\|_{W(I)}< \delta,$$
then there exits a unique solution ${\bf u}\in C(I, H)$ of \eqref{S} which satisfies ${\bf u}\in \big(M(I)\cap L^{\frac{2(N+4)}{N}}(I\times \R^N)\big)^{(m)}.$ Moreover,
\begin{gather*}
\displaystyle\sum_{j=1}^{m}\|u_j\|_{W(I)}\leq 2\delta;\\
\displaystyle\sum_{j=1}^{m}\|u_j\|_{M(I)} + \displaystyle\sum_{j=1}^{m}\|u_j\|_{L^\infty(I, H^2)}\leq C(\|\Psi\|_{H^2}+\delta^{\frac{N+4}{N-4}}).
\end{gather*}
Besides, the solution depends continuously on the initial data in the sense that there exists $\delta_0$ depending on $\delta,$ such that for any $\delta_1\in (0,\delta_0),$ if $\displaystyle\sum_{j=1}^{m}\|\psi_{j} - \varphi_{j}\|_{H^2}\leq \delta_1$ and ${\bf v}$ be the local solution of \eqref{S} with initial data $\varphi:=(\varphi_{1,0},...,\varphi_{m,0}),$ then ${\bf v}$ is defined on $I$ and for any admissible couple $(q,r)$,
$$\|{\bf u} - {\bf v}\|_{(L^q(I, L^r))^{(m)}}\leq C\delta_1.$$
\end{prop}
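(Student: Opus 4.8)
The plan is to run a fixed-point argument in the critical Strichartz spaces, much as in the proof of Theorem \ref{existence}, but now tracking the scale-invariant norms $M(I)$, $W(I)$, $Z(I)$, $N(I)$ instead of the subcritical $L^q_T(W^{2,r})$ norms, so that the smallness of the free solution in $W(I)$ (rather than smallness of $T$) produces the contraction. First I would set up the Duhamel map
$$\phi({\bf u})(t):=T(t)\Psi-i\sum_{j,k=1}^m\int_0^tT(t-s)\big(|u_k|^p|u_j|^{p-2}u_j(s)\big)\,ds$$
on the complete metric space
$$X:=\Big\{{\bf u}\in\big(C(I,H^2)\cap M(I)\cap W(I)\big)^{(m)}:\ \sum_j\|u_j\|_{W(I)}\le 2\delta,\ \sum_j\big(\|u_j\|_{M(I)}+\|u_j\|_{L^\infty(I,H^2)}\big)\le A\Big\}$$
with $A:=C(\|\Psi\|_{H^2}+\delta^{(N+4)/(N-4)})$ and the metric induced by a suitable Strichartz norm (e.g. $\sum_j\|u_j\|_{Z(I)}$ or $\sum_j\|u_j\|_{L^q(I,L^r)}$ for a fixed admissible pair). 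The admissible pair underlying the $M$, $W$, $Z$ norms is $(q,r)=\big(\tfrac{2(N+4)}{N-4},\tfrac{2(N+4)}{N-4}\big)$ for the function itself, and the dual exponent computations are exactly the critical ones: the nonlinearity $|u_k|^p|u_j|^{p-2}u_j$ is $(2p-1)$-homogeneous with $2p-1=\tfrac{N+4}{N-4}$, and after putting one derivative on the nonlinear term via the fractional Leibniz (Kato--Ponce) rule one lands in $N(I)=\|\nabla(\cdot)\|_{L^2(I,L^{2N/(N+2)})}$, which is the endpoint of Strichartz estimate \eqref{S2}.

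The core estimate to establish is the bilinear-type bound
$$\Big\|\nabla\big(f_{j,k}({\bf u})-f_{j,k}({\bf v})\big)\Big\|_{N(I)}\lesssim\Big(\sum_\ell\|u_\ell\|_{W(I)}+\|v_\ell\|_{W(I)}\Big)^{2p-2}\Big(\sum_\ell\|\nabla u_\ell\|_{M(I)\text{-type}}+\cdots\Big)\cdot(\text{difference norm}),$$
obtained by distributing the $2p-2$ "undifferentiated" factors into $Z(I)$ (Sobolev-embedding-equivalent to $W(I)$ at the critical scale, by Proposition \ref{injection}) and the one differentiated factor into the space dual-conjugate to $N(I)$, using Hölder in space-time with the exponent identity $\tfrac{1}{2}=\tfrac{2p-2}{q}+\tfrac1{q_0}$ and the corresponding spatial identity; the pointwise inequality \eqref{H1} and its $\nabla$-analogue handle the difference structure. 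Then Strichartz \eqref{S1}--\eqref{S2} give
$$\sum_j\|u_j\|_{M(I)}+\sum_j\|u_j\|_{L^\infty(I,H^2)}\lesssim\|\Psi\|_{H^2}+\Big(\sum_j\|u_j\|_{W(I)}\Big)^{2p-2}\sum_j\|u_j\|_{M(I)},$$
$$\sum_j\|u_j\|_{W(I)}\lesssim\sum_j\|e^{it\Delta^2}\psi_j\|_{W(I)}+\Big(\sum_j\|u_j\|_{W(I)}\Big)^{2p-1}<\delta+C(2\delta)^{2p-1},$$
so that for $\delta$ small, $\phi$ maps $X$ into itself and is a contraction for the weaker metric; uniqueness in $C(I,H)\cap(M\cap W)^{(m)}$ and the bound $\sum_j\|u_j\|_{L^\infty(I,H^2)}\le A$ follow. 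The continuous-dependence / stability clause is proved by the same Duhamel estimates applied to ${\bf u}-{\bf v}$: writing $\varphi$ for the perturbed data with $\sum_j\|\psi_j-\varphi_j\|_{H^2}\le\delta_1$, one first shows (by the same contraction, with $\|e^{it\Delta^2}\varphi_j\|_{W(I)}\le\|e^{it\Delta^2}\psi_j\|_{W(I)}+C\delta_1<2\delta$ when $\delta_1<\delta_0$) that ${\bf v}$ exists on all of $I$ with the same a priori bounds, then bounds $\|{\bf u}-{\bf v}\|_{(L^q(I,L^r))^{(m)}}\lesssim\delta_1+\delta^{2p-2}\|{\bf u}-{\bf v}\|_{(L^q(I,L^r))^{(m)}}$ and absorbs, giving $\le C\delta_1$; the same inequality with a general admissible $(q,r)$ on the left follows from one more application of Strichartz.

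The main obstacle I anticipate is purely technical bookkeeping rather than conceptual: verifying that all the Hölder exponent identities close at the critical exponent $p=p^*=\tfrac{N}{N-4}$ simultaneously in the time and space variables, and in particular handling the second derivative $\Delta(f_{j,k}({\bf u}))$ (needed to control $\|u_j\|_{M(I)}$, which carries $\Delta u_j$) — there one meets terms like $|u_k|^{p}|u_j|^{p-4}|\nabla u|^2$ and $|u_k|^{p-2}|u_j|^{p-1}\Delta u$ that are only integrable because $4<N\le 6$ keeps $p-2>0$ (and $p-4$ can be negative, so one must argue that the offending factor only appears multiplied by enough copies of the function to make the total power non-negative, exactly as in the treatment of $(\mathcal I_3),(\mathcal I_4)$ above). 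A second delicate point is that the contraction is obtained in a weaker norm than the one defining the ball $X$ — so one must check completeness of $X$ in that weaker metric (standard: $X$ is closed under weak-$*$ limits in the stronger norms), and that the fixed point indeed lies in $C(I,H)$, which follows from the Strichartz bound $\|u_j\|_{L^\infty(I,H^2)}<\infty$ together with the continuity of $t\mapsto T(t)\Psi$ and of the Duhamel integral. Everything else is a direct transcription of the scheme already carried out in Section 3.
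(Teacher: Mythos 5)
Your proposal follows essentially the same route as the paper's proof: a contraction mapping in the weak space--time Lebesgue norm $L^{\frac{2(N+4)}{N}}(I,L^{\frac{2(N+4)}{N}})$ on a ball constrained by the stronger $W(I)$ and $M(I)$ norms, a posteriori Strichartz bounds that place one derivative of the nonlinearity in $N(I)$ via \eqref{S2}, the Sobolev embedding giving $\|u\|_{W(I)}\lesssim\|u\|_{M(I)}$, a bootstrap (Lemma \ref{Bootstrap}) to obtain $\sum_j\|u_j\|_{W(I)}\le 2\delta$, and an absorption argument for the continuous-dependence clause. The one remark worth making is that your anticipated difficulty with $\Delta\big(f_{j,k}({\bf u})\big)$ does not actually arise: since \eqref{S2} controls the $M(I)$ norm by $\|\nabla f_{j,k}({\bf u})\|_{L^2(I,L^{2N/(N+2)})}$, only a single derivative ever falls on the nonlinearity, exactly as you observe earlier in your own sketch.
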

\begin{proof}
The proposition follows from a contraction mapping argument. For ${\bf u}\in( W(I))^{(m)}$, we let $\phi({\bf u})$ given by
$$\phi({\bf u})(t) := T(t){\Psi} -i \displaystyle\sum_{j,k =1}^{m}\displaystyle\int_0^tT(t-s)\Big(|u_k|^{\frac{N}{N-4}}|u_j|^{\frac{8-N}{N-4}}u_j(s)\Big)\,ds.$$
Define the set
$$ X_{M,\delta} := \{ {\bf u}\in (M(I))^{(m)};\,  \displaystyle\sum_{j=1}^{m}\|u_j\|_{W(I)}\leq 2\delta, \, \displaystyle\sum_{j=1}^{m}\|u_j\|_{L^{\frac{2(N+4)}{N}}(I,L^{\frac{2(N+4)}{N}})}\leq 2M\}$$
where $M := C \|\Psi\|_{(L^2)^{(m)}}$ and $\delta>0$ is sufficiently small. Using Strichartz estimate, we get
\begin{eqnarray*}
\|\phi({\bf u}) - \phi({\bf v})\|_{\big({L^{\frac{2(N+4)}{N}}(I,L^{\frac{2(N+4)}{N}})}\big)^{(m)}}&\lesssim& \displaystyle\sum_{j,k=1}^{m}\big\|f_{j,k}({\bf u}) - f_{j,k}({\bf v})\big\|_{L^{\frac{2(N+4)}{N+8}}(I,L^{\frac{2(N+4)}{N+8}})}.
\end{eqnarray*}
Using H\"older inequality and denoting the quantity $ (\mathcal{J}):= \big\|f_{j,k}({\bf u}) - f_{j,k}({\bf v})\big\|_{L^{\frac{2(N+4)}{N+8}}(I,L^{\frac{2(N+4)}{N+8}})}$, we obtain
\begin{eqnarray*}
(\mathcal{J})
&\lesssim&\big\|\Big(|u_k|^{\frac{4}{N-4}}|u_j|^{\frac{4}{N-4}} + |u_k|^{\frac{N}{N-4}}|u_j|^{\frac{8-N}{N-4}}\Big)|{\bf u} - {\bf v}|\big\|_{L_T^{\frac{2(N+4)}{N+8}}(L^{\frac{2(N+4)}{N+8}})}\\
&\lesssim&\|{\bf u} - {\bf v}\|_{L_T^{\frac{2(N+4)}{N}}(L^{\frac{2(N+4)}{N}})}\Big(\big\||u_k|^{\frac{4}{N-4}}|u_j|^{\frac{4}{N-4}}\big\|_{L_T^{\frac{N+4}{4}}(L^{\frac{N+4}{4}})}+ \big\||u_k|^{\frac{N}{N-4}}|u_j|^{\frac{8-N}{N-4}}\big\|_{L_T^{\frac{N+4}{4}}(L^{\frac{N+4}{4}})}\Big)\\
&\lesssim&\|{\bf u} - {\bf v}\|_{L_T^{\frac{2(N+4)}{N}}(L^{\frac{2(N+4)}{N}})}\Big(\|u_k\|_{L_T^{\frac{2(N+4)}{N- 4}}(L^{\frac{2(N+4)}{N - 4}})}^{\frac{4}{N-4}} \|u_j\|_{L_T^{\frac{2(N+4)}{N - 4}}(L^{\frac{2(N+4)}{N- 4}})}^{\frac{4}{N-4}}\\ &+& \|u_k\| _{L_T^{\frac{2(N+4)}{N - 4}}(L^{\frac{2(N+4)}{N - 4}})} ^{\frac{N}{N-4}}\|u_j\|_{L_T^{\frac{2(N+4)}{N - 4}}(L^{\frac{2(N+4)}{N - 4}})}^{\frac{8-N}{N-4}}\Big).
\end{eqnarray*}
By Proposition \ref{injection}, we have the Sobolev embedding
$$ \| u\|_{L^{\frac{2(N+4)}{N-4}}(I, L^{\frac{2(N + 4)}{N - 4}})}\lesssim \|\nabla u\|_{L^{\frac{2(N + 4)}{N-4}}(I, L^{\frac{2N(N + 4)}{N^2 -2N + 8}})}, $$
hence 
\begin{eqnarray*}
(\mathcal{J})
&\lesssim&\|{\bf u} - {\bf v}\|_{L_T^{\frac{2(N+4)}{N}}(L^{\frac{2(N+4)}{N}})}\Big(\|u_k\|_{W(I)}^{\frac{4}{N-4}} \|u_j\|_{W(I)}^{\frac{4}{N-4}}+\|u_k\| _{W(I)} ^{\frac{N}{N-4}}\|u_j\|_{W(I)}^{\frac{8-N}{N-4}}\Big)\\
&\lesssim& \delta^{\frac{8}{N-4}}\|{\bf u} - {\bf v}\|_{L_T^{\frac{2(N+4)}{N}}(L^{\frac{2(N+4)}{N}})}.
\end{eqnarray*}
Then
$$\|\phi({\bf u}) - \phi({\bf v})\|_{\big({L^{\frac{2(N+4)}{N}}(I,L^{\frac{2(N+4)}{N}})}\big)^{(m)}}\lesssim \delta^{\frac{8}{N-4}} \|{\bf u} - {\bf v}\|_{\big({L^{\frac{2(N+4)}{N}}(I,L^{\frac{2(N+4)}{N}})}\big)^{(m)}}.$$
Moreover, taking in the previous inequality ${\bf v=0}$, we get for small $\delta>0$,
\begin{eqnarray*}
\|\phi({\bf u})\|_{\big({L^{\frac{2(N+4)}{N}}(I,L^{\frac{2(N+4)}{N}})}\big)^{(m)}}
&\lesssim&C\|\Psi\|_{(L^2)^m}+ \delta^{\frac{8}{N-4}} M\\
&\lesssim&(1+ \delta^{\frac{8}{N-4}}) M\\
&\leq&2M.
\end{eqnarray*}
With a classical Picard argument, there exists ${\bf u}\in L^{\frac{2(N+4)}{N}}(I,L^{\frac{2(N+4)}{N}})$ a solution to \eqref{S} satisfying
 $$\|{\bf u}\|_{\big(L^{\frac{2(N+4)}{N}}(I,L^{\frac{2(N+4)}{N}})\big)^{(m)}}\leq 2M.$$
Taking account of Strichartz estimate we get,
\begin{eqnarray*}
\| {\bf u}\|_{(M(I))^{(m)}}
&\lesssim& \|\Delta \Psi\|_{({L^2})^{(m)}} +\displaystyle\sum_{j,k=1}^{m} \| \nabla f_{j,k}({\bf u})\|_{L_T^2(L^{\frac{2N}{N +2}})}.
\end{eqnarray*}
Let $(\mathcal{J}_1):= \| \nabla f_{j,k}({\bf u})\|_{L_T^2(L^{\frac{2N}{N +2}})} $. Using H\"older inequality and Sobolev embedding with, yields
\begin{eqnarray*}
(\mathcal{J}_1)
&\lesssim& \big\| |\nabla {\bf u}| \Big( |u_k|^{\frac{4}{N-4}}|u_j|^{\frac{4}{N-4}} + |u_k|^{\frac{N}{N-4}}|u_j|^{\frac{8-N}{N-4}}\Big)\big\|_{L_T^2(L^{\frac{2N}{N +2}})}\\
&\lesssim&\|\nabla{\bf u}\|_{L_T^{\frac{2(N+4)}{N - 4}}(L^{\frac{2N(N+4)}{N^2 - 2N +8}})}\big\||u_k|^{\frac{4}{N-4}}|u_j|^{\frac{4}{N-4}} + |u_k|^{\frac{N}{N-4}}|u_j|^{\frac{8-N}{N-4}}\big\|_{L_T^{\frac{N+4}{4}}(L^{\frac{N+4}{4}})}\\
&\lesssim&\|\nabla{\bf u}\|_{L_T^{\frac{2(N+4)}{N - 4}}(L^{\frac{2N(N+4)}{N^2 - 2N +8}})}\Big(\|u_k\|_{L_T^{\frac{2(N+4)}{N- 4}}(L^{\frac{2(N+4)}{N - 4}})}^{\frac{4}{N-4}} \|u_j\|_{L_T^{\frac{2(N+4)}{N - 4}}(L^{\frac{2(N+4)}{N- 4}})}^{\frac{4}{N-4}}\\ &+& \|u_k\| _{L_T^{\frac{2(N+4)}{N - 4}}(L^{\frac{2(N+4)}{N - 4}})} ^{\frac{N}{N-4}}\|u_j\|_{L_T^{\frac{2(N+4)}{N - 4}}(L^{\frac{2(N+4)}{N - 4}})}^{\frac{8-N}{N-4}}\Big)\\
&\lesssim&\|{\bf u}\|_{(W(I))^{(m)}}\Big(\|u_k\|_{W(I)}^{\frac{4}{N-4}} \|u_j\|_{W(I)}^{\frac{4}{N-4}}+\|u_k\| _{W(I)} ^{\frac{N}{N-4}}\|u_j\|_{W(I)}^{\frac{8-N}{N-4}}\Big).
\end{eqnarray*}
Then
\begin{eqnarray*}
 \|{\bf u}\|_{(M(I))^{(m)}}&\lesssim& \|\Psi\|_H +\displaystyle\sum_{j,k=1}^m\|{\bf u}\|_{(W(I))^{(m)}}\Big(\|u_k\|_{W(I)}^{\frac{4}{N-4}} \|u_j\|_{W(I)}^{\frac{4}{N-4}}+\|u_k\| _{W(I)} ^{\frac{N}{N-4}}\|u_j\|_{W(I)}^{\frac{8-N}{N-4}}\Big)  \\
&\lesssim& \|\Psi\|_H + \delta^{\frac{N + 4}{N - 4}}.
\end{eqnarray*}
By Proposition \ref{injection}, we have the continuous Sobolev embedding
$$ W^{2, \frac{2N(N + 4)}{N^2 + 16}} \hookrightarrow W^{1, \frac{2N(N + 4)}{N^2 - 2N + 8}}.$$
So, it follows that
\begin{equation}\label{*}\| {\bf u}\|_{(W(I))^{(m)}}\lesssim\| {\bf u}\|_{(M(I))^{(m)}}.\end{equation}
Thanks to Strichartz estimates \ref{S1}, we have
\begin{eqnarray*}
\| {\bf u}\|_{(W(I))^{(m)}}
&\lesssim& \delta +\|\int_0^tT(t-s)f_{j,k}(u)\,ds\|_{(W(I))^{(m)}}\\
&\lesssim& \delta +\|\int_0^tT(t-s)f_{j,k}(u)\,ds\|_{(M(I))^{(m)}}\\
&\lesssim& \delta +\| {\bf u}\|_{(W(I))^{(m)}}^{\frac{N + 4}{N - 4}}
\end{eqnarray*}
so, by Lemma \ref{Bootstrap}, 
$$\| {\bf u}\|_{(W(I))^{(m)}}\leq 2\delta. $$
Taking an admissible couple $(q,r)$, we return now to the lipschitz bound $ (\mathcal{J}_2):=\|{\bf u} - {\bf v}\|_{(L^q(I, L^r))^{(m)}}\leq C\delta_1.$ By H\"older inequality and Strichartz estimate, we have
{\small\begin{eqnarray*} 
(\mathcal{J}_2)&\lesssim&  \|\Psi - \varphi\|_{(L^2)^{(m)}} +  \displaystyle\sum_{j,k=1}^{m}\|f_{j,k}({\bf u}) - f_{j,k}({\bf v})\|_{L^{\frac{2(N+4)}{N+8}}(I,L^{\frac{2(N+4)}{N+8}})}\\
&\lesssim&\|\Psi - \varphi\|_{(L^2)^{(m)}} + \displaystyle\sum_{j,k=1}^{m}\big\|\Big(|u_k|^{\frac{4}{N-4}}|u_j|^{\frac{4}{N-4}} + |u_k|^{\frac{N}{N-4}}|u_j|^{\frac{8-N}{N-4}}\Big)|{\bf u} - {\bf v}|\big\|_{L^{\frac{2(N+4)}{N+8}}(I,L^{\frac{2(N+4)}{N+8}})}\\
&\lesssim&\|\Psi - \varphi\|_{(L^2)^{(m)}}+ \displaystyle\sum_{j,k=1}^{m}\|{\bf u} - {\bf v}\|_{L^{\frac{2(N+4)}{N}}(I,L^{\frac{2(N+4)}{N}})}\Big(\|u_k\|_{L^{\frac{2(N+4)}{N- 4}}(I,L^{\frac{2(N+4)}{N - 4}})}^{\frac{4}{N-4}} \|u_j\|_{L^{\frac{2(N+4)}{N - 4}}(I,L^{\frac{2(N+4)}{N- 4}})}^{\frac{4}{N-4}}\\ &+& \|u_k\| _{L^{\frac{2(N+4)}{N - 4}}(I,L^{\frac{2(N+4)}{N - 4}})} ^{\frac{N}{N-4}}\|u_j\|_{L^{\frac{2(N+4)}{N - 4}}(I,L^{\frac{2(N+4)}{N - 4}})}^{\frac{8-N}{N-4}}\Big)\\
&\lesssim&\|\Psi - \varphi\|_{(L^2)^{(m)}} + \delta^{\frac{8}{N - 4}}\| {\bf u} - {\bf v}\|_{\big({L^{\frac{2(N+4)}{N}}(I,L^{\frac{2(N+4)}{N}})}\big)^{(m)}}.
\end{eqnarray*}}
The proof is ended by taking $\delta$ small enough.
\end{proof}
We are ready to prove Theorem \ref{glb}.
\begin{proof}[{\bf Proof of Theorem \ref{glb}}]Denote the homogeneous Sobolev space ${\bf H}=($\.H$^2)^{(m)}$. Using the previous proposition via \eqref{*}, it suffices to prove that $\|{\bf u}\|_{\bf H}$ remains small on the whole interval of existence of ${\bf u}.$ 
Write with conservation of the energy and Sobolev's inequality
\begin{eqnarray*}
\|{\bf u}\|_{\bf H}^2&\leq& 2E(\Psi) +\frac{N -4}{N}\displaystyle \sum_{j,k=1}^{m}\displaystyle \int_{\R^N} |u_j(x,t)|^{\frac{N}{N - 4}} |u_k(x,t)|^{\frac{N}{N - 4}}\,dx \\
&\leq& C\big(  \xi(\Psi) + \xi(\Psi)^{\frac{N}{N - 4}}\big) + C \big(\displaystyle\sum_{j=1}^{m}\|\Delta u_j\|_2^2\big)^{\frac{N}{N - 4}}\\
&\leq& C\big(  \xi(\Psi) + \xi(\Psi)^{\frac{N}{N - 4}}\big) +C\|{\bf u}\|_{\bf H} ^{\frac{2N}{N - 4}}.
\end{eqnarray*}
So by Lemma \ref{Bootstrap}, 
if $\xi(\Psi)$ is sufficiently small, then ${\bf u}$ stays small in the ${\bf H}$ norm.
\end{proof}
\section{Scattering}
For any time slab $I,$ take the Strichartz space 
$$S(I):=C(I, H^2)\cap{L^{\frac{8p}{N(p -1)}}(I, W^{2, 2p})}$$
endowed the complete norm
$$ \|u\|_{S(I)}:= \|u\|_{L^\infty(I, H^2)} + \|u\|_{L^{\frac{8p}{N(p -1)}}(I, W^{2, 2p})}.$$
The first intermediate result is the following.
\begin{lem}
For any time slab $I,$ we have
$$\| {\bf u}(t) - e^{it\Delta^2}\Psi\|_{(S(I))^{(m)}}\lesssim\|{\bf u}\|_{\big(L^\infty(I, L^{2p})\big)^{(m)}}^{\frac{2pN(p-1)-8p}{N(p-1)}}\|{\bf u}\|_{\big(L^{\frac{8p}{N(p-1)}}(I, W^{2,2p})\big)^{(m)}}^{\frac{8p - N(p-1)}{N(p-1)}}.$$
\end{lem}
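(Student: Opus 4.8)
The plan is to run the same nonlinear estimate as in the proof of Theorem~\ref{existence}, except that, instead of extracting a power of $|I|$ by a H\"older inequality in time, one keeps track separately of the $L^\infty_t L^{2p}$ and the $L^q_t W^{2,2p}$ norms. First, by Proposition~\ref{fre} the solution obeys the Duhamel formula
\[
{\bf u}(t)-e^{it\Delta^2}\Psi=-i\sum_{j,k=1}^{m}\int_0^t T(t-s)\,f_{j,k}({\bf u})(s)\,ds,\qquad f_{j,k}({\bf u})=|u_k|^p|u_j|^{p-2}u_j .
\]
Set $q:=\tfrac{8p}{N(p-1)}$, $a':=\tfrac{8p}{p(8-N)+N}$ and $b':=\tfrac{2p}{2p-1}$; the couple $(q,2p)$ is admissible (this forces $p\le p^*$) and $(a,b)$ is its dual, which is exactly the couple used for the nonlinearity in Section~3. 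Applying \eqref{S1} to ${\bf u}-e^{it\Delta^2}\Psi$, which has vanishing Cauchy datum and source $\sum_k a_{jk}f_{j,k}({\bf u})$ in the $j$-th equation, with the admissible couples $(q,2p)$ and $(\infty,2)$ on the left controls the whole $S(I)$-norm, so it suffices to prove
\[
\sum_{j,k=1}^{m}\big\|f_{j,k}({\bf u})\big\|_{L^{a'}_I(W^{2,b'})}\ \lesssim\ \|{\bf u}\|_{(L^\infty_I L^{2p})^{(m)}}^{\,2p-q}\,\|{\bf u}\|_{(L^{q}_I W^{2,2p})^{(m)}}^{\,q-1},
\]
since $2p-q=\tfrac{2pN(p-1)-8p}{N(p-1)}$ and $q-1=\tfrac{8p-N(p-1)}{N(p-1)}$ are precisely the exponents in the statement.

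For the nonlinear bound I would, at fixed $t$, split the $W^{2,b'}$-norm over the two factors $|u_k|^p$ and $|u_j|^{p-2}u_j$ by the fractional Leibniz rule, and then control the second-order Sobolev norm of $|u|^p$ by $\|u\|_{W^{2,2p}}\,\|u\|_{L^{2p}}^{p-1}$ (and that of $|u|^{p-2}u$ by $\|u\|_{W^{2,2p}}\,\|u\|_{L^{2p}}^{p-2}$) via the fractional chain rule, using the Gagliardo--Nirenberg bound $\|\nabla u\|_{2p}^2\lesssim\|u\|_{2p}\|\Delta u\|_{2p}$ to absorb the quadratic-gradient contributions. Choosing every auxiliary Lebesgue exponent equal to $2p$ and using $\tfrac1{b'}=\tfrac{p}{2p}+\tfrac{p-1}{2p}=\tfrac{2p-1}{2p}$, this yields the pointwise-in-time estimate
\[
\big\|f_{j,k}({\bf u})(t)\big\|_{W^{2,b'}}\ \lesssim\ \|u_k(t)\|_{W^{2,2p}}\|u_k(t)\|_{L^{2p}}^{p-1}\|u_j(t)\|_{L^{2p}}^{p-1}+\|u_j(t)\|_{W^{2,2p}}\|u_j(t)\|_{L^{2p}}^{p-2}\|u_k(t)\|_{L^{2p}}^{p},
\]
in which exactly one genuine factor carries the two derivatives in $W^{2,2p}$ and the remaining $2p-2$ factors enter only through $L^{2p}$. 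Keeping $|u_k|^p$ and $|u_j|^{p-2}u_j$ unbroken under the product rule is the technical simplification over a crude Leibniz expansion, since no genuine mixed-gradient term then survives; it is here that $4\le N\le6$ is used, through the Sobolev embeddings of Proposition~\ref{injection} needed to keep all the auxiliary exponents in range.

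Finally I would integrate in time. Since $\tfrac1{a'}=1-\tfrac1q=\tfrac{q-1}{q}$, in each summand H\"older in $t$ lets me place the distinguished $W^{2,2p}$ factor together with $q-2$ units of the remaining degree-$(2p-2)$ pure-power block into $L^q_I$ (each bounded by $\|\cdot\|_{L^q_I(W^{2,2p})}$ since $W^{2,2p}\hookrightarrow L^{2p}$) and the leftover $2p-q$ units into $L^\infty_I(L^{2p})$; this split is possible exactly when $0\le q-2\le 2p-2$, i.e. $p_*\le p\le p^*$, consistent with the scattering hypothesis $p_*<p<p^*$. Summing over $j,k$ then gives the displayed estimate and hence the lemma. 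I expect the real difficulty to be the fractional Leibniz/chain-rule step: the powers $|u_k|^p$ and $|u_j|^{p-2}u_j$ are not smooth when $p$ is small (which may occur for $N=5,6$), so one must invoke fractional calculus (Kato--Ponce and Christ--Weinstein type inequalities) and verify carefully that every exponent produced lies in $[1,\infty]$ under $4\le N\le6$ and $p_*<p<p^*$; once that arithmetic is settled, the argument is a routine variant of the local theory of Section~3.
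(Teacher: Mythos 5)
Your proposal is correct and follows essentially the same route as the paper's proof: Duhamel plus the Strichartz estimate \eqref{S1} with the dual pair $\big(\tfrac{8p}{p(8-N)+N},\tfrac{2p}{2p-1}\big)$, a spatial H\"older estimate with every auxiliary exponent equal to $2p$, and a H\"older split in time placing $q-1=\tfrac{8p-N(p-1)}{N(p-1)}$ powers in $L^q_t$ and the remaining $2p-q$ in $L^\infty_t(L^{2p})$, which is exactly the paper's choice of the parameters $\theta,\mu,\alpha,\beta$. The only (harmless) divergence is in distributing the two derivatives: the paper expands $\Delta f_{j,k}({\bf u})$ explicitly into $\Delta{\bf u}\,(f_{j,k})_i({\bf u})$ and $|\nabla{\bf u}|^2(f_{j,k})_{ii}({\bf u})$ contributions and estimates each (keeping two $W^{2,2p}$ factors in the quadratic-gradient term), whereas you invoke the fractional Leibniz/chain rule together with Gagliardo--Nirenberg to retain a single distinguished $W^{2,2p}$ factor --- both yield the stated bound, and the low-regularity caveat you flag for small $p$ is equally present (and equally unaddressed) in the paper's own computation.
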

\begin{proof}
Using Strichartz estimate, we have
$$\| {\bf u}(t) - e^{it\Delta^2}\Psi\|_{(S(I))^{(m)}}\lesssim \displaystyle\sum_{j,k=1}^m \|f_{j,k}({\bf u})\|_{L^{\frac{8p}{p(8 - N) + N}}(I, W^{2,\frac{2p}{2p -1}})}.$$
Thanks to H\"older inequality, we get
\begin{eqnarray*}
\|f_{j,k}\|_{L^\frac{2p}{2p -1}}&\lesssim&\big\||u_k|^p|u_j|^{p - 1}\big\|_{L^\frac{2p}{2p -1}}
\lesssim\|u_k\|_{L^{2p}}^p\|u_j\|_{L^{2p}}^{p -1}.
\end{eqnarray*}
Letting $\mu =\theta:= \frac{8p - N(p - 1)}{2N(p - 1) }$, we get $p - \theta ={\frac{N(p -1)(2p +1) - 8p}{2N(p -1)}}$ and $ p - 1 -\mu={\frac{N(p -1)(2p -1) - 8p}{2N(p -1)}}$. Moreover,
\begin{eqnarray*}
\|f_{j,k}\|_{L^{\frac{8p}{p(8 - N) + N}}(I, L^{\frac{2p}{2p -1}})}&\lesssim& \big\|\|u_k\|_{L^{2p}}^p\|u_j\|_{L^{2p}}^{p -1} \big\|_{L^{\frac{8p}{p(8 - N) + N}}(I)}\\
&\lesssim&\|u_k\|_{L^\infty(I,L^{2p})}^{p -\theta}\|u_j\|_{L^\infty(I,L^{2p})}^{p -1-\mu}\big\|\|u_k\|_{L^{2p}}^\theta\|u_j\|_{L^{2p}}^{\mu} \big\|_{L^{\frac{8p}{p(8 - N) + N}}(I)}\\
&\lesssim&\|u_k\|_{L^\infty(I, L^{2p})}^{p -\theta}\|u_j\|_{L^\infty(I, L^{2p})}^{p -1-\mu}\|u_k\|_{L^{\frac{8p}{N(p -1)}}(I, L^{2p})}^{\theta}\|u_j\|_{L^{\frac{8p}{N(p -1)}}(I, L^{2p})}^{\mu}.
\end{eqnarray*}
Then,
\begin{eqnarray}
\displaystyle\sum_{j,k=1}^m\|f_{j,k}\|_{L^{\frac{8p}{p(8 - N) + N}}(I, L^{\frac{2p}{2p -1}})}&\lesssim&\displaystyle\sum_{j,k=1}^m\|u_k\|_{L^\infty(I, L^{2p})}^{p -\theta}\|u_j\|_{L^\infty(I, L^{2p})}^{p -1-\mu}\|u_k\|_{L^{\frac{8p}{N(p -1)}}(I, L^{2p})}^{\theta}\|u_j\|_{L^{\frac{8p}{N(p -1)}}(I, L^{2p})}^{\mu}\nonumber\\
&\lesssim&\displaystyle\sum_{k=1}^m\|u_k\|_{L^\infty(I, L^{2p})}^{p -\theta}\|u_k\|_{L^{\frac{8p}{N(p -1)}}(I, L^{2p})}^{\theta}\displaystyle\sum_{j=1}^m\|u_j\|_{L^\infty(I, L^{2p})}^{p -1 - \mu}\|u_j\|_{L^{\frac{8p}{N(p -1)}}(I, L^{2p})}^{\mu}\nonumber\\
&\lesssim&\Big(\displaystyle\sum_{k=1}^m\big(\|u_k\|_{L^\infty(I, L^{2p})}^{p -\theta}\big)^2\Big)^{\frac{1}{2}}\Big(\displaystyle\sum_{k=1}^m\big( \|u_k\|_{L^{\frac{8p}{N(p -1)}}(I, L^{2p})}^{\theta}\big)^2\Big)^{\frac{1}{2}}\nonumber\\
&\times&\Big(\displaystyle\sum_{j=1}^m\big(\|u_j\|_{L^\infty(I, L^{2p})}^{p - 1- \mu}\big)^2\Big)^{\frac{1}{2}}\Big(\displaystyle\sum_{j=1}^m\big( \|u_j\|_{L^{\frac{8p}{N(p -1)}}(I, L^{2p})}^{\mu}\big)^2\Big)^{\frac{1}{2}}\nonumber\\
&\lesssim&\|{\bf u}\|_{\big(L^\infty(I, L^{2p})\big)^{(m)}}^{\frac{2pN(p -1) - 8p}{N(p - 1)}}\|{\bf u}\|_{\big(L^{\frac{8p}{N(p -1)}}(I, L^{2p})\big)^{(m)}}^{\frac{8p - N(p - 1)}{N(p -1)}}\label{sct1}.
\end{eqnarray}
It remains to estimate the quantity $(\mathcal{I}):=\|\Delta (f_{j,k}({\bf u}))\|_{{L^{\frac{8p}{p(8 - N) + N}}(I, L^{\frac{2p}{2p -1}})}}.$ Write
\begin{eqnarray*}
(\mathcal{I})&\lesssim&
\sum_{i=1}^m \|\Delta{\bf u} (f_{j,k})_i({\bf u})\|_{{L^{\frac{8p}{p(8 - N) + N}}(I, L^{\frac{2p}{2p -1}})}} + \||\nabla {\bf u}|^2(f_{j,k})_{ii}({\bf u})\|_{{L^{\frac{8p}{p(8 - N) + N}}(I, L^{\frac{2p}{2p -1}})}}\\
&\lesssim& (\mathcal{I}_1)  + (\mathcal{I}_2) .
\end{eqnarray*}
Using H\"older inequality, we obtain
\begin{eqnarray*}
\|\Delta{\bf u} (f_{j,k})_i({\bf u})\|_{L^{\frac{2p}{2p -1}}}&\lesssim&\big\| \Delta{\bf u}\big(|u_k|^{p-1}|u_j|^{p-1} + |u_k|^p|u_j|^{p-2}\big)\big\|_{L^{\frac{2p}{2p -1}}}\\
&\lesssim&\|\Delta{\bf u}\|_{(L^{2p})^m}\Big(\big\||u_k|^{p-1}|u_j|^{p - 1}\big\|_{L^\frac{p}{p - 1}}   + \big\||u_k|^{p }|u_j|^{p - 2}\big\|_{L^\frac{p}{p - 1}} \Big)\\
&\lesssim&\|\Delta{\bf u}\|_{(L^{2p})^m}\Big(\|u_k\|_{L^{2p}}^{p-1} \|u_j\|_{L^{2p}}^{p-1} + \|u_k\|_{L^{2p}}^{p}\|u_j\|_{L^{2p}}^{p-2}  \Big).
\end{eqnarray*} 
Letting $\theta = \mu =\alpha = \beta =:\frac{4p - N(p -1)}{N(p - 1)},$ we get $p -1 -\theta=\frac{N(p -1)p - 4p}{N(p - 1)}$ and
\begin{eqnarray*}
(\mathcal{I}_1)
 &\lesssim& \Big\|\|\Delta{\bf u}\|_{(L^{2p})^m}\Big(\|u_k\|_{L^{2p}}^{p-1} \|u_j\|_{L^{2p}}^{p-1} + \|u_k\|_{L^{2p}}^{p}\|u_j\|_{L^{2p}}^{p-2}  \Big)\Big\|_{L^{\frac{8p}{p(8 - N) +N}}}\\
&\lesssim&\|\Delta {\bf u}\|_{\big(L^{\frac{8p}{N(p -1)}}(I, L^{2p})\big)^{(m)}}\Big(\big\|\|u_k\|_{L^{2p}}^{p-1} \|u_j\|_{L^{2p}}^{p-1}\big\|_{L^{\frac{8p}{8p - 2N(p - 1)}}} + \big\|\|u_k\|_{L^{2p}}^{p}\|u_j\|_{L^{2p}}^{p-2}\big\|_{L^{\frac{8p}{8p - 2N(p - 1)}}}\Big)\\
&\lesssim&\|\Delta {\bf u}\|_{\big(L^{\frac{8p}{N(p -1)}}(I, L^{2p})\big)^{(m)}}\Big( \|u_k\|_{L^\infty(I, L^{2p})}^{p -1 -\theta}\|u_j\|_{L^\infty(I, L^{2p})}^{p -1- \mu}\big\|\|u_k\|_{L^{2p}}^{\theta} \|u_j\|_{L^{2p}}^{\mu}\big\|_{L^{\frac{8p}{8p - 2N(p - 1)}}} \\
&+&\|u_k\|_{L^\infty(I, L^{2p})}^{p - \alpha}\|u_j\|_{L^\infty(I, L^{2p})}^{p -2-\beta}\big\|\|u_k\|_{L^{2p}}^{\alpha} \|u_j\|_{L^{2p}}^{\beta}\big\|_{L^{\frac{8p}{8p - 2N(p - 1)}}} \Big)\\
&\lesssim& \|\Delta {\bf u}\|_{\big(L^{\frac{8p}{N(p -1)}}(I, L^{2p})\big)^{(m)}}\Big( \|u_k\|_{L^\infty(I, L^{2p})}^{p -1 -\theta}\|u_j\|_{L^\infty(I, L^{2p})}^{p -1- \mu}\|u_k\|_{L^{\frac{8p}{N(p -1)}}(I, L^{2p})}^{\theta} \|u_j\|_{L^{\frac{8p}{N(p -1)}}(I, L^{2p})}^{\mu} \\
&+&\|u_k\|_{L^\infty(I, L^{2p})}^{p - \alpha}\|u_j\|_{L^\infty(I, L^{2p})}^{p -2-\beta}\|u_k\|_{L^{\frac{8p}{N(p -1)}}(I, L^{2p})}^{\alpha} \|u_j\|_{L^{\frac{8p}{N(p -1)}}(I, L^{2p})}^{\beta} \Big).
\end{eqnarray*}
Then, with $\mathcal{A}:= \displaystyle\sum_{i,j,k=1}^m\|\Delta{\bf u} (f_{j,k})_i({\bf u})\|_{{L^{\frac{8p}{p(8 - N) + N}}(I, L^{\frac{2p}{2p -1}})}},$ we have
\begin{eqnarray*}
\mathcal{A}
&\lesssim&\|\Delta {\bf u}\|_{\big(L^{\frac{8p}{N(p -1)}}(I, L^{2p})\big)^{(m)}}\Big(\displaystyle\sum_{k=1}^m \|u_k\|_{L^\infty(I, L^{2p})}^{p -1 -\theta}\|u_k\|_{L^{\frac{8p}{N(p -1)}}(I, L^{2p})}^{\theta}\nonumber\\
&\times&\displaystyle\sum_{j=1}^m \|u_j\|_{L^\infty(I, L^{2p})}^{p -1- \mu}\|u_j\|_{L^{\frac{8p}{N(p -1)}}(I, L^{2p})}^{\mu} +\displaystyle\sum_{k=1}^m\|u_k\|_{L^\infty(I, L^{2p})}^{p - \alpha}\|u_k\|_{L^{\frac{8p}{N(p -1)}}(I, L^{2p})}^{\alpha}\nonumber\\
&\times&\displaystyle\sum_{j=1}^m \|u_j\|_{L^\infty(I, L^{2p})}^{p -2-\beta}\|u_j\|_{L^{\frac{8p}{N(p -1)}}(I, L^{2p})}^{\beta} \Big)\nonumber.
\end{eqnarray*}
This implies that
{\small\begin{eqnarray}
\mathcal{A}
&\lesssim&\|\Delta {\bf u}\|_{\big(L^{\frac{8p}{N(p -1)}}(I, L^{2p})\big)^{(m)}}\Big(\Big(\displaystyle\sum_{k=1}^m \big(\|u_k\|_{L^\infty(I, L^{2p})}^{p -1 -\theta}\big)^2\Big)^{\frac{1}{2}}\Big(\displaystyle\sum_{k=1}^m\big(\|u_k\|_{L^{\frac{8p}{N(p -1)}}(I, L^{2p})}^{\theta}\big)^2\Big)^{\frac{1}{2}}\nonumber\\
&\times&\Big(\sum_{j=1}^m \big(\|u_j\|_{L^\infty(I, L^{2p})}^{p -1- \mu}\big)^2\Big)^{\frac{1}{2}}\Big(\sum_{j=1}^m\big(\|u_j\|_{L^{\frac{8p}{N(p -1)}}(I, L^{2p})}^{\mu}\big)^2\Big)^{\frac{1}{2}}\nonumber\\
 &+&\Big(\displaystyle\sum_{k=1}^m\big(\|u_k\|_{L^\infty(I, L^{2p})}^{p - \alpha}\big)^2\Big)^{\frac{1}{2}}\Big(\displaystyle\sum_{k=1}^m\big(\|u_k\|_{L^{\frac{8p}{N(p -1)}}(I, L^{2p})}^{\alpha}\big)^2\Big)^{\frac{1}{2}}\nonumber\\
&\times&\Big(\displaystyle\sum_{j=1}^m\big(\|u_j\|_{L^\infty(I, L^{2p})}^{p -2-\beta} \big)^2\Big)^{\frac{1}{2}}\Big(\displaystyle\sum_{j=1}^m\big(\|u_j\|_{L^{\frac{8p}{N(p -1)}}(I, L^{2p})}^{\beta}\big)^2\Big)^{\frac{1}{2}} \Big)\nonumber\\
&\lesssim&\|{\bf u}\|_{\big(L^{\frac{8p}{N(p -1)}}(I, W^{2,2p})\big)^{(m)}}\|{\bf u}\|_{\big({L^\infty(I, L^{2p})}\big)^{(m)}}^{\frac{2pN(p-1)-8p}{N(p-1)}}\|{\bf u}\|_{\big({L^{\frac{8p}{N(p -1)}}(I, L^{2p})}\big)^{(m)}}^{\frac{8p-2N(p-1)}{N(p-1)}}.\label{sct2}
\end{eqnarray}}
Similarly, with $ \mathcal{B}:=\displaystyle\sum_{j,k=1}^m\||\nabla {\bf u}|^2(f_{j,k})_{ii}({\bf u})\|_{L^{\frac{8p}{p(8-N)+N}}(I, L^{\frac{2p}{2p - 1}})},$ we obtain
\begin{eqnarray}
\mathcal{B}
&\lesssim&\displaystyle\sum_{j,k=1}^m\big\||\nabla {\bf u}|^2\Big(|u_k|^{p-2}|u_j|^{p-1} + |u_k|^{p}|u_j|^{p-3}\Big)\big\|_{L^{\frac{8p}{p(8-N)+N}}(I, L^{\frac{2p}{2p - 1}})}\nonumber\\
&\lesssim&\|{\bf u}\|_{\big({L^{\frac{8p}{N(p-1)}}}(I, W^{2,2p})\big)^{(m)}}^2
 \|{\bf u}\|_{\big({L^\infty(I, L^{2p})}\big)^{(m)}}^{\frac{2pN(p-1)-8p}{N(p-1)}}\|{\bf u}\|_{\big({L^{\frac{8p}{N(p-1)}}}(I, L^{2p})\big)^{(m)}}^{\frac{8p-3N(p-1)}{N(p-1)}}\label{sct3}.
\end{eqnarray}
Finally, thanks to \eqref{sct1}-\eqref{sct2}-\eqref{sct3}, it follows that
$$ \|{\bf u}(t) - e^{it\Delta^2}\Psi\|_{(S(I))^{(m)}}\lesssim\|{\bf u}\|_{\big(L^\infty(I, L^{2p})\big)^{(m)}}^{\frac{2pN(p-1)-8p}{N(p-1)}}\|{\bf u}\|_{\big(L^{\frac{8p}{N(p-1)}}(I, W^{2,2p})\big)^{(m)}}^{\frac{8p - N(p-1)}{N(p-1)}}.$$
\end{proof}{}
The next auxiliary result is about the decay of solution.
\begin{lem}\label{t1}
For any $2<r<\frac{2N}{N - 4},$ we have
$$\displaystyle\lim_{t\to \infty}\|{\bf u}(t)\|_{(L^r)^{(m)}}= 0.$$
\end{lem}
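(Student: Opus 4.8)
The plan is to combine the a priori bounds furnished by the conservation laws with the Morawetz estimate of Proposition~\ref{prop2''}, and then to promote the resulting time-integrability into a genuine limit by a uniform-continuity argument. Since $\|{\bf u}(t)\|_{(L^r)^{(m)}}=\sum_{j=1}^m\|u_j(t)\|_{L^r}$, it suffices to prove $\lim_{t\to\infty}\|u_j(t)\|_{L^r}=0$ for each fixed $j$ (the limit $t\to-\infty$ following by the time-reversal symmetry $u_j(t)\mapsto\overline{u_j(-t)}$). Because the $a_{jk}$ are positive, the potential part of the energy is nonnegative, so conservation of mass and energy gives $\sup_{t\in\R}\|u_j(t)\|_{H^2}\le C_0=C_0(\Psi)$; by Proposition~\ref{injection} this already yields $\sup_{t\in\R}\|u_j(t)\|_{L^\rho}\lesssim 1$ for $2\le\rho<\frac{2N}{N-4}$.

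First I would record a modulus of continuity in time. From the equation $\partial_t u_j=i\Delta^2u_j+i\sum_k a_{jk}|u_k|^p|u_j|^{p-2}u_j$: the bilaplacian term lies in $L^\infty(\R,H^{-2})$, and, since $p\le p^*$, the Sobolev embedding $H^2\hookrightarrow L^{\frac{2N}{N-4}}$ places $|u_k|^p|u_j|^{p-1}$ in some $L^\infty(\R,L^q)$ with $q>\frac{2N}{N+4}$, hence in $L^\infty(\R,H^{-2})$; thus $u_j$ is Lipschitz from $\R$ to $H^{-2}$. Interpolating with $u_j\in L^\infty(\R,H^2)$ gives $\|u_j(t)-u_j(s)\|_{L^2}\lesssim|t-s|^{1/2}$, and one further interpolation with the uniform $L^{q_1}$-bound for a fixed $q_1\in(2,\frac{2N}{N-4})$ shows that $t\mapsto u_j(t)$ is uniformly Hölder continuous into $L^{\rho}$ for every $2\le\rho<\frac{2N}{N-4}$; consequently $t\mapsto\|u_j(t)\|_{L^\rho}^k$ is uniformly continuous on $\R$ for every such $\rho$ and every $k\ge1$.

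Next I invoke Proposition~\ref{prop2''}. If $N=5$ it gives $\int_\R\|u_j(t)\|_{L^4}^4\,dt\lesssim 1$; since a nonnegative, integrable, uniformly continuous function on $\R$ must vanish at infinity, $\|u_j(t)\|_{L^4}\to0$, and then $\|u_j(t)\|_{L^r}\to0$ for every $2<r<\frac{2N}{N-4}$ by interpolating $\|u_j(t)\|_{L^r}$ between $\|u_j(t)\|_{L^2}$ (conserved) and $\|u_j(t)\|_{L^4}$ when $2<r<4$, and between $\|u_j(t)\|_{L^4}$ and $\|u_j(t)\|_{L^{q_1}}$ (uniformly bounded, $q_1$ close to $\frac{2N}{N-4}$) when $4\le r<\frac{2N}{N-4}$. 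If $N=6$ it gives $\int_\R f_j(t)\,dt\lesssim1$, where $f_j(t):=\int_{\R^N\times\R^N}\frac{|u_j(t,x)|^2|u_j(t,y)|^2}{|x-y|^5}\,dx\,dy$. By the Hardy--Littlewood--Sobolev inequality, $f_j(t)$ and $|f_j(t)-f_j(s)|$ are controlled by products of $L^{12/7}$-norms of $|u_j(t)|^2$ and of $|u_j(t)|^2-|u_j(s)|^2$; since $\frac{24}{7}<\frac{2N}{N-4}=6$, the former are $\lesssim1$, while the Hölder continuity of the second paragraph gives $|f_j(t)-f_j(s)|\lesssim|t-s|^\gamma$, so $f_j$ is uniformly continuous and therefore $f_j(t)\to0$. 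Restricting the integral defining $f_j$ to $x,y$ in a common unit ball $B(z,1)$ (so $|x-y|\le2$) yields $f_j(t)\gtrsim\sup_z\big(\int_{B(z,1)}|u_j(t,x)|^2\,dx\big)^2$, whence $\sup_z\|u_j(t)\|_{L^2(B(z,1))}\to0$. Finally, for $2<r<\frac{2N}{N-4}$ I decompose $u_j(t)=P_{\le M}u_j(t)+P_{>M}u_j(t)$: Bernstein gives $\|P_{>M}u_j(t)\|_{L^r}\lesssim M^{-\eta}\|u_j(t)\|_{H^2}$ with $\eta=2-N(\frac12-\frac1r)>0$, and, $M$ being fixed, the kernel bound for $P_{\le M}$ gives $\|P_{\le M}u_j(t)\|_{L^\infty}\lesssim_M\sup_z\|u_j(t)\|_{L^2(B(z,1))}$, hence $\|P_{\le M}u_j(t)\|_{L^r}\lesssim\|u_j(t)\|_{L^2}^{2/r}\|P_{\le M}u_j(t)\|_{L^\infty}^{1-2/r}\to0$; choosing $M$ large and then $t$ large gives $\|u_j(t)\|_{L^r}\to0$.

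I expect the main difficulty to be precisely this upgrade from the time-integrated Morawetz bound to an honest limit: it forces the careful use of the equation in the negative Sobolev space $H^{-2}$, together with the interpolation estimates of the second paragraph, to obtain uniform continuity in $t$ of the Morawetz densities, and, when $N=6$, the extra passage through local $L^2$-mass evacuation and a Littlewood--Paley splitting in order to convert the weighted Morawetz functional into smallness of the full $L^r$-norm. (The case $N=4$, for which Proposition~\ref{prop2''} is not recorded, is treated in the same way starting from the corresponding Morawetz inequality.)
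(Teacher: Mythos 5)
Your proof is correct, but it takes a genuinely different route from the paper's. The paper argues by contradiction in a concentration--compactness style: after reducing to $r=2+\frac4N$ by interpolation, it uses the Gagliardo--Nirenberg inequality $\|u_j(t)\|_{2+4/N}^{2+4/N}\lesssim \|u_j(t)\|_{H^2}^2\big(\sup_x\|u_j(t)\|_{L^2(Q_1(x))}\big)^{4/N}$ to extract, from a hypothetical non-decaying sequence $t_n\to\infty$, cubes $Q_1(x_n)$ carrying mass at least $\epsilon$; it then translates, passes to a weak limit, and invokes a separately proved stability claim (weak convergence of the data implies closeness of the spatially localized solutions in $L^\infty_TL^2$) to show that this local mass persists on uniform time windows $[t_n,t_n+T]$, which forces the Morawetz integral to diverge. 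You avoid compactness and contradiction altogether: you derive a uniform H\"older modulus for $t\mapsto u_j(t)$ in $L^\rho$ by reading the equation in $H^{-2}$ and interpolating, so that the Morawetz density is uniformly continuous as well as integrable in time and hence vanishes at infinity (Barbalat's lemma), and you then convert the vanishing of the weighted or localized quantity into decay of the full $L^r$ norm by interpolation when $N=5$, and by local-mass evacuation plus a Littlewood--Paley/Bernstein splitting when $N=6$. Your version is more quantitative and dispenses with the subsequence extraction, the Rellich theorem, and the stability claim, which is the most delicate ingredient of the paper's proof; the paper's version is the standard argument in the literature (Colliander--Grillakis--Tzirakis, Miao--Wu--Zhang), and its persistence-of-concentration step plays exactly the role that your uniform continuity estimate plays, so the two mechanisms substitute for one another. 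One shared caveat: both proofs in fact only cover $N=5$ and $N=6$, since Proposition \ref{prop2''} records no Morawetz inequality for $N=4$; the paper's proof, like your closing parenthetical, leaves that case unaddressed.
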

\begin{proof}
Let $\chi \in C^\infty_0(\R^N)$ be a cut-off function and $\varphi_n:=(\varphi_1^n,...,\varphi_m^n)$ be a sequence in $H$ satisfying $\displaystyle\sup_{n}\|\varphi_n\|_{H}<\infty$ and
$$ \varphi_n\rightharpoonup \varphi := (\varphi_1,...,\varphi_m)\in H.$$
Let ${\bf u}_n:=(u_1^n,...,u_m^n)\; (\mbox{respectively}\; {\bf u}:=(u_1,...,u_m))$ be the solution in $C(\R, H)$ to \eqref{S} with initial data $\varphi_n\, (\mbox{respectively}\; \varphi).$ In what follows, we prove a claim.\\
{\bf Claim.}\\
 For every $\epsilon>0,$ there exist $T_\epsilon>0$ and $ n_\epsilon\in \N$ such that
 \begin{equation}\label{chi} \|\chi({\bf u}_n - {\bf u})\|_{(L_{T_\epsilon}^\infty  (L^2))^{(m)}}<\epsilon \quad \mbox{for all}\; n>n_\epsilon.\end{equation}
In fact, we introduce the functions ${\bf v}_n:= \chi {\bf u}_n$ and ${\bf v} :=\chi {\bf u}.$ We compute, $v_j^n(0)  = \chi \varphi_j^n$ and
\begin{eqnarray*} i\partial_t v_j^n + \Delta^2 v_j^n &=& \Delta^2\chi u_j^n + 2 \nabla \Delta\chi \nabla u_j^n +  \Delta\chi\Delta u_j^n + 2 \nabla \chi \nabla\Delta u_j^n \\
&+&2\big(\nabla\Delta\chi\nabla u_j^n + \nabla\chi\nabla\Delta u_j^n + 2\displaystyle\sum_{i=1}^N \nabla\partial_i \chi\nabla\partial_i u_j^n\big) + \chi\big(\displaystyle\sum_{k=1}^m|u_k^n|^p|u_j^n|^{p - 2}u_j^n\big).\end{eqnarray*}
Similarly, $v_j(0)=\chi\phi_j$ and
\begin{eqnarray*} i\partial_t v_j + \Delta^2 v_j &=& \Delta^2\chi u_j + 2 \nabla \Delta\chi \nabla u_j +  \Delta\chi\Delta u_j + 2 \nabla \chi \nabla\Delta u_j \\
&+&2\big(\nabla\Delta\chi\nabla u_j + \nabla\chi\nabla\Delta u_j + 2\displaystyle\sum_{i=1}^N \nabla\partial_i \chi\nabla\partial_i u_j\big) + \chi\big(\displaystyle\sum_{k=1}^m|u_k|^p|u_j|^{p - 2}u_j\big).\end{eqnarray*}
Denoting ${\bf w}_n:= {\bf v}_n - {\bf v}$ and ${\bf z}_n:= {\bf u}_n - {\bf u},$ we have
\begin{eqnarray*} i\partial_t w_j^n + \Delta^2 w_j^n &=& \Delta^2\chi z_j^n+ 4 \nabla \Delta\chi \nabla z_j^n +  \Delta\chi\Delta z_j^n + 4 \nabla \chi \nabla\Delta z_j^n \\
&+& 4\displaystyle\sum_{i=1}^N \nabla\partial_i \chi\nabla\partial_i z_j^n + \chi\big(\displaystyle\sum_{k=1}^m|u_k^n|^p|u_j^n|^{p - 2}u_j^n - \displaystyle\sum_{k=1}^m|u_k|^p|u_j|^{p - 2}u_j\big).\end{eqnarray*}
By Strichartz estimate, we obtain
\begin{eqnarray*}\|{\bf w}_n\|_{\big(L_T^\infty(L^2) \cap L^{\frac{8p}{N(p-1)}}_T(L^{2p})\big)^{(m)}}&\lesssim& \|\chi(\varphi_n - \varphi)\|_{(L^2)^{(m)}} + \|\Delta^2\chi {\bf z}_n\|_{(L^1_T(L^2))^{(m)}}+ 4 \|\nabla \Delta\chi \nabla {\bf z}_n\|_{(L^1_T(L^2))^{(m)}}\\
 &+& 4 \|\nabla \chi \nabla\Delta {\bf z}_n\|_{(L^1(L^2))^{(m)}} + 4\| \nabla\partial_i \chi\nabla\partial_i {\bf z}_n\|_{(L^1(L^2))^{(m)}} \\
&+&\displaystyle\sum_{j,k=1}^m\big\|\chi\big(|u_k^n|^p|u_j^n|^{p - 2}u_j^n - |u_k|^p|u_j|^{p - 2}u_j\big)\big\|_{L^{\frac{8p}{p(8-N) + N}}_T(L^{\frac{2p}{2p-1}})}.
\end{eqnarray*}
Thanks to the Rellich Theorem, up to subsequence extraction, we have
$$\epsilon:=\|\chi(\varphi_n - \varphi)\|\longrightarrow0\quad\mbox{as}\quad n\longrightarrow\infty.$$
Moreover, by the conservation laws via properties of $\chi$,
\begin{eqnarray*}
 \mathcal{I}_1
&:=&\|\Delta^2\chi {\bf z}_n\|_{(L^1_T(L^2))^{(m)}}+ 4 \|\nabla \Delta\chi \nabla {\bf z}_n\|_{(L^1_T(L^2))^{(m)}}+ 4 \|\nabla \chi \nabla\Delta {\bf z}_n\|_{(L^1_T(L^2))^{(m)}} + 4\| \nabla\partial_i \chi\nabla\partial_i {\bf z}_n\|_{(L^1_T(L^2))^{(m)}}\\
&\lesssim&\| {\bf z}_n\|_{(L^1_T(L^2))^{(m)}}+  \| \nabla {\bf z}_n\|_{(L^1_T(L^2))^{(m)}}+  \| \nabla\Delta{\bf z}_n\|_{(L^1_T(L^2))^{(m)}} + \|\nabla\partial_i {\bf z}_n\|_{(L^1_T(L^2))^{(m)}}\\
&\lesssim& CT,
\end{eqnarray*}
where $$ C:= \|{\bf u}\|_{(L^\infty(\R,H^2))^{(m)}} + \|{\bf u}_n\|_{(L^\infty(\R,H^2))^{(m)}} .$$
Arguing as previously, we have
\begin{eqnarray*}
\mathcal{I}_2&:=&\|\chi(|u_k^n|^p|u_j^n|^{p-2}u_j^n - |u_k|^p|u_j|^{p-2}u_j)\|_{L^{\frac{8p}{p(8 - N) + N}}_T(L^{\frac{2p}{2p -1}})}\\
&\lesssim&\|\chi(|u_k^n|^{p -1}|u_j^n|^{p-1} - |u_k|^p|u_j|^{p-2})|{\bf u}_n - {\bf u}|\|_{L^{\frac{8p}{p(8 - N) + N}}_T(L^{\frac{2p}{2p -1}})}\\
&\lesssim&\|\chi({\bf u}_n  - {\bf u})\|_{L^{\frac{8p}{p(8 - N) + N}}_T((L^{2p})^{(m)})}\Big( \|u_k^n\|_{L^\infty_T(L^{2p})} ^{p-1}\|u_j^n\|_{L^\infty_T(L^{2p})} ^{p-1}  + \|u_k\|_{L^\infty_T(L^{2p})} ^{p}\|u_j\|_{L^\infty_T(L^{2p})} ^{p-2}\Big)\\
&\lesssim&T^{\frac{8p - 2N(p-1)}{8p}}\|{\bf w}_n \|_{L^{\frac{8p}{N(p -1)}}_T((L^{2p})^{(m)})}\Big( \|u_k^n\|_{L^\infty_T(H^2)} ^{p-1}\|u_j^n\|_{L^\infty_T(H^2)} ^{p-1}  + \|u_k\|_{L^\infty_T(H^2)} ^{p}\|u_j\|_{L^\infty_T(H^2)} ^{p-2}\Big)\\
&\lesssim&T^{\frac{8p - 2N(p-1)}{8p}}\|{\bf w}_n \|_{L^{\frac{8p}{N(p -1)}}_T((L^{2p})^{(m)})}\Big( \|u_k^n\|_{L^\infty_T(H^2)} ^{2(p-1)} + \|u_j^n\|_{L^\infty_T(H^2)} ^{2(p-1)}  + \|u_k\|_{L^\infty_T(H^2)} ^{2p} + \|u_j\|_{L^\infty_T(H^2)} ^{2(p-2)}\Big)\\
&\lesssim&T^{\frac{8p - 2N(p-1)}{8p}}\|{\bf w}_n \|_{L^{\frac{8p}{N(p -1)}}_T((L^{2p})^{(m)})}.
\end{eqnarray*}
As a consequence
\begin{eqnarray*}
\|{\bf w}_n\|_{\big(L_T^\infty(L^2) \cap L^{\frac{8p}{N(p-1)}}_T(L^{2p})\big)^{(m)}}
&\lesssim& \epsilon + CT + T^{\frac{8p - 2N(p-1)}{8p}}\|{\bf w}_n \|_{L^{\frac{8p}{N(p -1)}}((L^{2p})^{(m)})}\\
&\lesssim& \frac{\epsilon + T}{1 - T^{\frac{8p - 2N(p-1)}{8p}}}.
\end{eqnarray*}
The claim is proved.\\
By an interpolation argument it is sufficient to prove the decay for $r:= 2 +\frac{4}{N}.$ We recall the following Gagliardo-Nirenberg inequality
\begin{equation}\label{GN}
\|u_j(t)\|_{2 + \frac{4}{N}}^{2 + \frac{4}{N}}\leq C \|u_j(t)\|_{H^2}^2
\Big(\displaystyle\sup_x \|u_j(t)\|_{L^2(Q_1(x))}\Big)^{\frac{4}{N}},
\end{equation}
where $Q_a(x)$ denotes the square centered at $x$ whose edge has length $a$.
We proceed by contradiction. Assume that there exist a sequence $(t_n)$ of positive real numbers and $\epsilon >0$ such that $\displaystyle\lim_{n\to \infty}t_n =\infty$ and 
\begin{equation} \label{IN}\|u_j(t_n)\|_{L^{2 + \frac{4}{N}}}>\epsilon\quad \mbox{for all}\; n\in \N. \end{equation}
By \eqref{GN} and \eqref{IN}, there exist a sequence $(x_n)$ in $\R^N$ and a positive real number denoted also by $\epsilon>0$ such that
\begin{equation}\label{IN1}\|u_j(t_n)\|_{L^2(Q_1(x_n))}\geq\epsilon,\quad \mbox{for all}\; n\in \N. \end{equation}
Let $\phi_j^n(x):=u_j(t_n,x +x_n).$ Using the conservation laws, we obtain
$$ \sup_n\|\phi_j^n\|_{H^2}<\infty.$$
Then, up to a subsequence extraction, there exists $\phi_j\in H^2$ such that $\phi_j^n$ convergence weakly to $\phi_j$ in $H^2.$ By Rellich Theorem, we have 
$$ \displaystyle\lim_{n\to \infty}\|\phi_j^n - \phi_j\|_{L^2(Q_1(0))}=0.$$
Moreover, thanks to \eqref{IN1} we have, $\|\phi_j^n\|_{L^2(Q_1(0))}\geq \epsilon.$ So, we obtain
$$\|\phi_j\|_{L^2(Q_1(0))}\geq \epsilon.$$
We denote by $\bar{u}_j\in C(\R, H^2)$ the solution of \eqref{S} with data $\phi_j$ and ${u}_j^n\in C(\R, H^2)$ the solution of \eqref{S} with data $\phi_j^n.$ Take a cut-off function $\chi \in C_0^\infty(\R^N)$ which satisfies $0\leq \chi\leq1,\; \chi=1$ on $Q_1(0)$ and $supp(\chi)\subset Q_2(0).$ Using a continuity argument, there exists $T>0$ such that
$$\displaystyle\inf_{t\in[0, T]}\|\chi \bar{u}_j(t) \|_{L^2(\R^N)}\geq \frac{\epsilon}{2}.$$
Now, taking account of the claim \eqref{chi}, there is a positive time denoted also $T$ and $n_\epsilon\in \N$ such that $$\|\chi(u_j^n - \bar{u}_j)\|_{L_T^\infty(L^2)}\leq \frac{\epsilon}{4}\quad \mbox{for all}\; n\geq n_\epsilon.$$
Hence, for all $t\in [0, T]$ and $n\geq n_\epsilon,$
$$ \|\chi u_j^n(t)\|_{L^2}\geq \|\chi \bar{u}_j(t)\|_{L^2} - \|\chi(u_j^n - \bar{u}_j)(t)\|_{L^2}\geq \frac{\epsilon}{4}.$$
Using a uniqueness argument, it follows that $u^n_j(t,x)=u_j(t+t_n,x+x_n)$. Moreover, by the properties of $\chi$ and the last inequality, for all $t\in[0, T]$ and $n\geq n_\epsilon,$
$$ \|u_j(t+t_n)\|_{L^2(Q_2(x_n))}\geq \frac{\epsilon}{4}.$$
This implies that 
$$\|u_j(t)\|_{L^2(Q_2(x_n))}\geq \frac{\epsilon}{4},\quad \mbox{for all}\; t\in [t_n, t_n + T]\;\mbox{and all}\; n\geq n_\epsilon.$$
Moreover, as $\displaystyle\lim_{n\to \infty}t_n=\infty,$
 we can suppose that $t_{n +1}- t_n>T$ for $n\geq n_\epsilon.$ Therefore, thanks to Morawetz estimates \eqref{mrwtz1}, we get for $N>5,$ the contradiction
\begin{eqnarray*}
1 &\gtrsim&\displaystyle\int_0^\infty\displaystyle\int_{\R^N\times\R^N}\frac{|u_j(t,x)|^2|u_j(t,y)|^2}{|x - y|^5}\,dxdydt\\
 &\gtrsim&\displaystyle\sum_n\displaystyle\int_{t_n}^{t_{n} +T}\displaystyle\int_{Q_2(x_n)\times Q_2(x_n)}|u_j(t,x)|^2|u_j(t,y)|^2\,dxdydt\\
&\gtrsim& \displaystyle\sum_nT\big(\frac{\epsilon}{4}\big)^4 = \infty.
\end{eqnarray*}
Using \eqref{mrwtz2}, for $N=5$, write
\begin{eqnarray*}
1
&\gtrsim&\int_0^{\infty}\|u_j(t)\|_{L^4(\R^5)}^4dt\\
&\gtrsim&\sum_n\int_{t_n}^{t_n+T}\|u_j(t)\|_{L^4(Q_2(x_n))}^4dt\\
&\gtrsim&\sum_n\int_{t_n}^{t_n+T}\|u_j(t)\|_{L^2(Q_2(x_n))}^4dt\\
&\gtrsim&\sum_n(\frac\varepsilon4)^4T=\infty.
\end{eqnarray*}
This completes the proof of Lemma \ref{t1}.\\
Finally, we are ready to prove scattering. By the two previous lemmas we have 
$$ \|{\bf u}\|_{(S(t,\infty))^{(m)}}\lesssim \|\Psi\|_{H} + \epsilon(t) \|{\bf u}\|_{(S(t,\infty))^{(m)}}^{\frac{8p - N(p-1)}{N(p-1)}},$$
where $ \epsilon(t)\to 0, \; \mbox{as}\; t\to \infty.$ It follows from Lemma \ref{Bootstrap} that 
$${\bf u} \in (S(\R))^{(m)}.$$
Now, let ${\bf v}(t)= e^{-it\Delta^2}{\bf u}(t).$ Taking account of Duhamel formula 
$${\bf v}(t)= \Psi + i\displaystyle\sum_{j,k=1}^m\displaystyle\int_0^t e^{-is\Delta^2}\big(|u_k|^p|u_j|^{p-2}u_j(s) \big)\, ds.$$
Thanks to \eqref{sct1},\eqref{sct2} and \eqref{sct3}, 
$$f_{j,k}({\bf u})\in L^{\frac{8p}{p(8-N)}}(\R, W^{2, \frac{2p}{2p -1}}),$$ 
so, applying Strichartz estimate, we get for $0<t<\tau,$
\begin{eqnarray*}
\|{\bf v}(t) - {\bf v}(\tau)\|_{H}
&\lesssim&\displaystyle\sum_{j,k=1}^m\big\||u_k|^p|u_j|^{p-2}u_j \big\|_{L^{\frac{8p}{p(8-N)}}((t,\tau), W^{2, \frac{2p}{2p -1}})}\stackrel{t,\tau\rightarrow\infty}{\longrightarrow0}.
\end{eqnarray*}
Taking $u_\pm:=\lim_{t\rightarrow\pm\infty}{\bf v}(t)$, we get 
$$\lim_{t\rightarrow\pm\infty}\|{\bf u}(t)-e^{it\Delta^2}u_{\pm}\|_{H^2}=0.$$
Scattering is proved.
\end{proof}{}
\section{Appendix}
\subsection{Blow-up criterion}
We give a useful criterion for global existence in the critical case.
\begin{prop}Let $p= \frac{N}{N-4}$ and ${\bf u} \in C([0, T), H)$ be a solution of \eqref{S} satisfying $\| {\bf u}\|_{(Z([0, T]))^{(m)}}<+\infty.$ Then, there exists $K:= K ( \| \Psi\|_{H},\, \| {\bf u}\|_{(Z([0, T]))^{(m)}}),$ such that
\begin{equation}\label{G}
\| {\bf u}\|_{\big({L^{\frac{2(N+4)}{N}}([0, T], L^{\frac{2(N+4)}{N}})}\big)^{(m)}} + \| {\bf u}\|_{\big({L^\infty([0, T], {\bf H})}\big)^{(m)}}+ \| {\bf u}\|_{(M([0, T]))^{(m)}}\leq K
\end{equation}
and ${\bf u}$ can be extended to a solution $ \tilde{{\bf u}} \in C([0, T^\prime), H)$ of \eqref{S} for some $T^\prime > T.$
\end{prop}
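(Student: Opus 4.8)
The plan is to run the standard critical‑case continuation scheme, exploiting that in the defocusing regime the conservation laws already control $\|{\bf u}(t)\|_H$ (here $4<N\le 6$, as in Proposition~\ref{proposition 1}). First, since every $a_{jk}>0$, the nonlinear term in $E$ is nonnegative, so conservation of the energy gives $\tfrac12\|{\bf u}(t)\|_{\bf H}^2\le E({\bf u}(t))=E(\Psi)$ for every $t\in[0,T)$; together with conservation of the mass, $\sum_j\|u_j(t)\|_2^2=\sum_j\|\psi_j\|_2^2$, and with the critical Sobolev embedding $H^2\hookrightarrow L^{\frac{2N}{N-4}}$ (Proposition~\ref{injection}), used to bound $E(\Psi)$ by a function of $\|\Psi\|_H$, this produces a constant $C_0=C_0(\|\Psi\|_H)$ with $\sup_{t\in[0,T)}\|{\bf u}(t)\|_H\le C_0$. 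In particular the $L^\infty([0,T],{\bf H})$ term of \eqref{G} is already controlled.

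Next is the main step. Since $\|{\bf u}\|_{(Z([0,T]))^{(m)}}<\infty$ and $t\mapsto\|{\bf u}\|_{(Z([0,t]))^{(m)}}$ is continuous, split $[0,T]$ into $L=L(\|{\bf u}\|_{Z([0,T])})$ consecutive subintervals $I_\ell=[t_\ell,t_{\ell+1}]$ with $\|{\bf u}\|_{(Z(I_\ell))^{(m)}}\le\eta$, where $\eta$ is a small \emph{absolute} constant to be fixed. On each $I_\ell$ apply the Strichartz estimates (the $L^2$‑level one and \eqref{S2}) to the Duhamel representation of ${\bf u}$ based at $t_\ell$. The nonlinearity $f_{j,k}({\bf u})=|u_k|^p|u_j|^{p-2}u_j$ is a monomial of degree $2p-1$ with $2p-2=\tfrac{8}{N-4}$, so, estimating $f_{j,k}({\bf u})$ and $\nabla f_{j,k}({\bf u})$ exactly as in the proof of Proposition~\ref{proposition 1}, but now placing the $\tfrac{8}{N-4}$ factors carrying no derivative directly in the $Z$‑norm (rather than bounding that norm by the $W$‑norm), and using \eqref{*} to control the $W$‑norm of the derivative factor by the $M$‑norm, one obtains, writing $Y_\ell:=\|{\bf u}\|_{(M(I_\ell))^{(m)}}+\|{\bf u}\|_{(L^{\frac{2(N+4)}{N}}(I_\ell\times\R^N))^{(m)}}$,
$$Y_\ell\ \lesssim\ \|{\bf u}(t_\ell)\|_H+\eta^{\frac{8}{N-4}}\,Y_\ell .$$
This is first run on $[t_\ell,s]$, $s<t_{\ell+1}$, where $Y$ is a priori finite (the solution lies in the Strichartz spaces on compact subintervals of $[0,T)$), and then one lets $s\uparrow t_{\ell+1}$; for $\eta$ small the last term is absorbed (or one invokes Lemma~\ref{Bootstrap}), leaving $Y_\ell\lesssim\|{\bf u}(t_\ell)\|_H\le C_0$ by Step~1, uniformly in $\ell$. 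Summing the $\ell$‑th bounds over the $L$ subintervals (an $\ell^q$ sum in the time variable) then yields \eqref{G} with $K=K(\|\Psi\|_H,\|{\bf u}\|_{Z([0,T])})$.

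Finally, the continuation. With \eqref{G} available, the nonlinear estimates above show that $\sum_{j,k}f_{j,k}({\bf u})$ lies in the relevant dual Strichartz spaces over all of $[0,T)$, so the increments of $e^{-it\Delta^2}{\bf u}(t)$, bounded by those dual norms on shrinking subintervals, tend to $0$ as $t\uparrow T$; hence ${\bf u}(t)$ converges in $H$ to some ${\bf u}(T)$. Since ${\bf u}(T)\in H$, estimate \eqref{S2} together with the embedding $W^{2,\frac{2N(N+4)}{N^2+16}}\hookrightarrow W^{1,\frac{2N(N+4)}{N^2-2N+8}}$ shows that $t\mapsto e^{i(t-T)\Delta^2}{\bf u}(T)$ has finite $W(\R)$‑norm, so $\|e^{i(t-T)\Delta^2}{\bf u}(T)\|_{W([T,T+\rho])}<\delta$ once $\rho$ is small. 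The time‑translated Proposition~\ref{proposition 1} then produces a solution on $[T,T+\rho]$ with data ${\bf u}(T)$, and gluing it to ${\bf u}$ (the two agree at $T$ and each satisfies the Duhamel identity, so uniqueness makes the concatenation a genuine solution) gives $\tilde{\bf u}\in C([0,T'),H)$ with $T'=T+\rho>T$. I expect the main obstacle to be not any new estimate — the nonlinear bounds are exactly those from the proof of Proposition~\ref{proposition 1}, merely rebalanced to exhibit the smallness of the $Z$‑norm — but the bookkeeping of the second step: making the per‑subinterval absorption rigorous via the finiteness/continuity argument and checking that $K$ depends on the data only through $\|\Psi\|_H$ and, via the number $L$ of subintervals, through $\|{\bf u}\|_{Z([0,T])}$.
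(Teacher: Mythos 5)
Your proposal is correct and follows essentially the same scheme as the paper: subdivide $[0,T]$ into finitely many slabs on which the $Z$-norm is at most a small $\eta$, run Strichartz on each slab to get $\|{\bf u}\|_{M(I_\ell)}+\|{\bf u}\|_{L^{\frac{2(N+4)}{N}}(I_\ell\times\R^N)}\lesssim\|{\bf u}(t_\ell)\|_{\bf H}+\|\Psi\|_{L^2}+\eta^{\frac{8}{N-4}}(\cdots)$ and absorb, then use smallness of the linear flow in $W$ near $T$ to invoke Proposition \ref{proposition 1} and extend. The two points where you deviate are both harmless and, if anything, slightly cleaner: the paper does not invoke energy conservation for the uniform control of $\|{\bf u}(t_\ell)\|_{\bf H}$, but instead iterates the per-slab inequality $\|{\bf u}(t_{\ell+1})\|_{\bf H}\le C\|{\bf u}(t_\ell)\|_{\bf H}$ across the $n$ slabs to land on a bound of the form $2C^n\|\Psi\|_{\bf H}$ (still a function of $\|\Psi\|_H$ and $\|{\bf u}\|_Z$ only, since $n$ is), whereas your defocusing-energy argument gives the uniform bound in one stroke; and for the continuation the paper does not first construct ${\bf u}(T)$ as a limit but shows directly, via dominated convergence, that $\|e^{i(t-t_0)\Delta^2}{\bf u}(t_0)\|_{W([t_1,T'])}\le\delta$ for $t_1<T<T'$ suitably chosen and restarts from $t_1$, which is equivalent to your version.
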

\begin{proof}
Let $\eta>0$ a small real number and $ M:=\| {\bf u}\|_{(Z([0, T]))^{(m)}}$. The first step is to establish \eqref{G}. In order to do so, we subdivide $[0, T]$ into $n$ slabs $I_j$ such that 
 $$n \sim (1 + \frac{M}{\eta})^{\frac{2(N+4)}{N - 4}}\quad\mbox{and}\quad \| {\bf u}\|_{(Z([0, T]))^{(m)}} \leq \eta. $$
Denote $ (\mathcal{A}):=\| {\bf u}\|_{(M([t_j, t]))^{(m)}}$ and $ I_j = [t_j, t_{j+1}]$. For $t\in I_j,$ by Strichartz estimate and arguing as previously 
\begin{eqnarray*}
(\mathcal{A})- \|{\bf u}(t_j)\|_{\bf H} 
&\lesssim& \| \nabla f_{j,k}({\bf u})\|_{\big(L^2([t_j, t],L^{\frac{2N}{N +2}})\big)^{(m)}}\\
&\lesssim&\displaystyle\sum_{j,k=1}^{m}\|\nabla{\bf u}\|_{L^{\frac{2(N+4)}{N - 4}}([t_j, t],L^{\frac{2N(N+4)}{N^2 - 2N +8}})}\Big(\|u_k\|_{L^{\frac{2(N+4)}{N- 4}}([t_j, t],L^{\frac{2(N+4)}{N - 4}})}^{\frac{4}{N-4}} \|u_j\|_{L^{\frac{2(N+4)}{N - 4}}([t_j, t],L^{\frac{2(N+4)}{N- 4}})}^{\frac{4}{N-4}}\\ &+& \|u_k\| _{L^{\frac{2(N+4)}{N - 4}}([t_j, t],L^{\frac{2(N+4)}{N - 4}})} ^{\frac{N}{N-4}}\|u_j\|_{L^{\frac{2(N+4)}{N - 4}}[t_j, t],(L^{\frac{2(N+4)}{N - 4}})}^{\frac{8-N}{N-4}}\Big)\\
&\lesssim& \|{\bf u}\|_{(W([t_j, t]))^{(m)}}\|{\bf u}\|_{(Z([t_j, t]))^{(m)}}^{\frac{8}{N-4}}\\
&\lesssim& \|{\bf u}\|_{(M([t_j, t]))^{(m)}}\|{\bf u}\|_{(Z([t_j, t]))^{(m)}}^{\frac{8}{N-4}}\lesssim \eta^{\frac{8}{N-4}}\|{\bf u}\|_{(M([t_j, t]))^{(m)}}.
\end{eqnarray*}
Take $( \mathcal{B}):=\|{\bf u}\|_{\big({L^{\frac{2(N+4)}{N}}([t_j, t],L^{\frac{2(N+4)}{N}})}\big)^{(m)}}$. Applying Strichartz estimates, we get
\begin{eqnarray*}
(\mathcal{B})-C\|{\bf u}(t_j)\|_{(L^2)^{(m)}} 
&\leq& C\displaystyle\sum_{j,k=1}^{m}\||u_k|^{\frac{N}{N - 4}}|u_j|^{\frac{8 - N}{N - 4}}u_j\|_{L^{\frac{2(N + 4)}{N + 8}}([t_j, t], L^{\frac{2(N + 4)}{N + 8}})}\\
&\leq& C\displaystyle\sum_{j,k=1}^{m}\displaystyle\big\||u_k|^{\frac{N}{N - 4}}|u_j|^{\frac{8 - N}{N - 4}}\big\| _{L^{\frac{N + 4}{N }}([t_j, t], L^{\frac{N + 4}{N }})}\|u_j\|_{L^{\frac{2(N + 4}{N }}([t_j, t], L^{\frac{2(N + 4)}{N}})}\\
&\leq& C\displaystyle\sum_{j,k=1}^{m}\|u_k\|  _{L^{\frac{2(N + 4)}{N - 4 }}([t_j, t], L^{\frac{2(N + 4)}{N - 4}})}^{\frac{N}{N - 4}} \|u_j\| _{L^{\frac{2(N + 4)}{N - 4 }}([t_j, t], L^{\frac{2(N + 4)}{N - 4}})}^{\frac{8 - N}{N - 4}}\|u_j\|_{L^{\frac{2(N + 4)}{N }}([t_j, t], L^{\frac{2(N + 4)}{N}})}\\
&\leq&  C \|{\bf u}\| _{\big(L^{\frac{2(N + 4)}{N - 4 }}([t_j, t], L^{\frac{2(N + 4)}{N - 4}})\big)^{(m)}}^{\frac{8 }{N - 4}}\|{\bf u}\|_{\big(L^{\frac{2(N + 4)}{N }}([t_j, t], L^{\frac{2(N + 4)}{N}})\big)^{(m)}}\\
&\leq& C \|{\bf u}\| _{(Z([t_j, t]))^{(m)}}^{\frac{8 }{N - 4}}\|{\bf u}\|_{\big(L^{\frac{2(N + 4)}{N }}([t_j, t], L^{\frac{2(N + 4)}{N}})\big)^{(m)}}\\
&\leq& C \eta^{\frac{8 }{N - 4}}\|{\bf u}\|_{\big(L^{\frac{2(N + 4)}{N }}([t_j, t], L^{\frac{2(N + 4)}{N}})\big)^{(m)}}.
\end{eqnarray*}
If $\eta$ is sufficiently small, with conservation of the mass, yields
$$ \|{\bf u}\|_{\big({L^{\frac{2(N+4)}{N}}([t_j, t],L^{\frac{2(N+4)}{N}})}\big)^{(m)}} \leq C\|\Psi\|_{(L^2)^{(m)}}$$
and
$$ \| {\bf u}\|_{(M([t_j, t]))^{(m)}} \leq C \|{\bf u}(t_j)\|_{\bf H}. $$
Applying again Strichartz estimates, yields
$$ \| {\bf u}\|_{\big( L^\infty([t_j, t], {\bf H})\big)^{(m)}} \leq C \|{\bf u}(t_j)\|_{\bf H} . $$
In particular, $\|{\bf u}(t_{j +1})\|_{\bf H} \leq C \|{\bf u}(t_j)\|_{\bf H}$. Finally,
$$ \| {\bf u}\|_{\big( L^\infty([t_j, t], {\bf H})\big)^{(m)}}+\| {\bf u}\|_{(M([t_j, t]))^{(m)}} \leq2 C^n\|\Psi\|_{\bf H}<+\infty.$$
The first step is done. Choose $t_0\in I_n,$ Duhamel's formula gives 
\begin{eqnarray*}
{\bf u}(t) = e^{i(t - t_0)\Delta^2}{\bf u}(t_0) - i \displaystyle\sum_{j,k=1}^{m}\int_{t_0}^te^{i(t - s)\Delta^2}\Big(|u_k|^{\frac{N}{N - 4}}|u_j|^{\frac{8 - N}{N - 4}}u_j(s)\Big)\,ds.
\end{eqnarray*}
Thanks to Sobolev inequality and Strichartz estimate,
\begin{eqnarray*}
\|e^{i(t - t_0)\Delta^2}{\bf u}(t_0)\|_{(W([t_0, t]))^{m}}
&\leq &\|{\bf u}\|_{(W([t_0, t]))^{m}} +C \displaystyle\sum_{j,k=1}^{m}\big\||u_k|^{\frac{N}{N - 4}}|u_j|^{\frac{8 - N}{N - 4}}u_j \big\|_{N([t_0, t])}\\
&\leq &\|{\bf u}\|_{(W([t_0, t]))^{m}} +C\|{\bf u}\|_{(W([t_0, t]))^{m}}^{\frac{N + 4}{N - 4}}.
\end{eqnarray*}
Dominated convergence ensures that the $ \|{\bf u}\|_{(W([t_0, T]))^{m}}$ can be made arbitrarily small as $t_0\to T,$ then
$$\|e^{i(t - t_0)\Delta^2}{\bf u}(t_0)\|_{(W([t_0, T]))^{m}}\leq \delta,$$
where $\delta$ is as in Proposition \ref{proposition 1}. In particular, we can find $t_1\in (0, T)$ and $T^{\prime }>T$ such that
$$\|e^{i(t - t_0)\Delta^2}{\bf u}(t_0)\|_{(W([t_1, T^{\prime}]))^{m}}\leq \delta.$$
Now, it follows from Proposition \ref{proposition 1} that there exists ${\bf v}\in C([t_1, T'], H)$ such that ${\bf v}$ solves \eqref{S} with $ p = \frac{N}{N -4}$ and ${\bf u}(t_1) = {\bf v}(t_1).$ By uniqueness, ${\bf u} = {\bf v}$ in $[t_1, T)$ and ${\bf u}$ can be extended in $[0, T'].$
\end{proof}
\subsection{ Morawetz estimate}
In what follows we give a classical proof, inspired by \cite{Colliander, Miao 2}, of Morawetz estimates. Let ${\bf u}:=(u_1,...,u_m)\in H$ be solution to 
$$i\partial_t u_j +\Delta^2 u_j+ \displaystyle\sum_{k=1}^{m}a_{jk}|u_k|^p|u_j|^{p-2}u_j=0 $$
in $N_1$-spatial dimensions and ${\bf v}:=(v_1,...,v_m)\in H$ be solution to
$$i\partial_t v_j +\Delta^2 v_j+ \displaystyle\sum_{k=1}^{m}a_{jk}|v_k|^p|v_j|^{p-2}v_j =0$$
in $N_2$-spatial dimensions. Define the tensor product ${\bf w}:= ({\bf u}\otimes{\bf v})(t,z)$ for $z$ in
$$  \R^{N_1 +N_2}:= \{ (x,y)\quad\mbox{s. t}\quad x\in \R^{N_1}, y\in \R^{N_2}\}$$
by the formula
$$ ({\bf u}\otimes{\bf v})(t,z) = {\bf u}(t,x){\bf v}(t,y) .$$
Denote $F({\bf u}):= \displaystyle\sum_{k=1}^{m}a_{jk}|u_k|^p|u_j|^{p-2}u_j.$ A direct computation shows that ${\bf w}:=(w_1,...,w_n)= {\bf u}\otimes{\bf v}$ solves the equation
\begin{equation}\label{tensor1}
i\partial_t w_j +\Delta^2 w_j+ F({\bf u})\otimes v_j +  F({\bf v})\otimes u_j:=i\partial_t w_j +\Delta^2 w_j+ h=0
\end{equation}
where $\Delta^2:= \Delta_x^2 + \Delta_y^2.$ Define the Morawetz action corresponding to ${\bf w}$ by
\begin{eqnarray*}
M_a^{\otimes_2}
&:=& 2\displaystyle\sum_{j=1}^m\displaystyle\int_{\R^{N_1}\times \R^{N_2}}\nabla a(z).\Im(\overline{u_j\otimes v_j(z)}\nabla (u_j\otimes v_j)(z))\,dz\\
&=& 2\displaystyle\int_{\R^{N_1}\times \R^{N_2}}\nabla a(z).\Im({\bf \bar{w}}(z)\nabla ({\bf\bar w})(z))\,dz,
\end{eqnarray*}
where $\nabla: =(\nabla_x,\nabla_y).$ It follows from equation \eqref{tensor1} that
\begin{gather*}
 \Im(\partial_t \bar{w}_j\partial_i w_j) =\Re (-i\partial_t \bar{w}_j\partial_i w_j)= - \Re \big((\Delta^2 \bar{w}_j +\displaystyle\sum_{k=1}^{m}a_{jk}|\bar{u}_k|^p|\bar{u}_j|^{p-2}\bar{u}_j \bar{v}_j +\displaystyle\sum_{k=1}^{m}a_{jk}|\bar{v}_k|^p|\bar{v}_j|^{p-2}\bar{v}_j \bar{u}_j)\partial_i w_j\big);\\
 \Im( \bar{w}_j\partial_i\partial_t w_j) =\Re (-i \bar{w}_j\partial_i\partial_t w_j)=\Re \big(\partial_i(\Delta^2 w_j +\displaystyle\sum_{k=1}^{m}a_{jk}|u_k|^p|u_j|^{p-2}u_j v_j +\displaystyle\sum_{k=1}^{m}a_{jk}|v_k|^p|v_j|^{p-2}v_j u_j) \bar{w}_j\big). 
\end{gather*}
Moreover, denoting the quantity $ \big\{ h,w_j\big\}_p:=\Re \big(h\nabla\bar{w}_j - w_j\nabla\bar{h} \big)$, we compute
\begin{eqnarray*}
 \big\{ h,w_j\big\}_p^i
& = &\partial_i\Big(\displaystyle\sum_{k=1}^{m}a_{jk}|\bar{u}_k|^p|\bar{u}_j|^{p-2}\bar{u}_j \bar{v}_j +\displaystyle\sum_{k=1}^{m}a_{jk}|\bar{v}_k|^p|\bar{v}_j|^{p-2}\bar{v}_j \bar{u}_j\Big) w_j\\
& -& \Big(\displaystyle\sum_{k=1}^{m}a_{jk}|u_k|^p|u_j|^{p-2}u_j v_j +\displaystyle\sum_{k=1}^{m}a_{jk}|v_k|^p|v_j|^{p-2}v_j u_j\Big) \partial_i\bar{w}_j.
\end{eqnarray*}
It follows that
\begin{eqnarray*}
\partial_t M_a^{\otimes_2}&=& 2\displaystyle\sum_{j=1}^m\displaystyle\int_{\R^{N_1}\times \R^{N_2}}\partial_i a\Re \big(\bar{w}_j\partial_i \Delta^2 w_j - \partial_iw_j \Delta^2\bar{w}_j\big)\,dz  - 2\displaystyle\sum_{j=1}^m \displaystyle\int_{\R^{N_1}\times \R^{N_2}}\partial_i a \big\{h,w_j\big\}_p^i\,dz\\
&=&-2\displaystyle\sum_{j=1}^m\displaystyle\int_{\R^{N_1}\times \R^{N_2}}\big[\Delta a\Re(\bar{w}_j \Delta^2 w_j) +2\Re( \partial_i a\partial_i\bar{w}_j \Delta^2 w_j)\big] \,dz - \displaystyle\sum_{j=1}^m2 \displaystyle\int_{\R^{N_1}\times \R^{N_2}}\partial_i a \big\{h,w_j\big\}_p^i\,dz\\
&:=& \mathcal{I}_1+ \mathcal{I}_2 - 2\displaystyle\sum_{j=1}^m \displaystyle\int_{\R^{N_1}\times \R^{N_2}}\partial_i a \big\{h,w_j\big\}_p^i\,dz.
\end{eqnarray*}
Similar computations done in \cite{Miao 2}, give
\begin{eqnarray*}
\mathcal{I}_1 + \mathcal{I}_2 &=& 2 \displaystyle\sum_{j=1}^m \Re \displaystyle\int_{\R^{N_1}\times \R^{N_2}}\Big\{2\big(\partial_{ik}^x\Delta_x a \partial_i\bar{u}_j\partial_k u_j|v_j|^2 + \partial_{ik}^y\Delta_y a \partial_i\bar{v}_j\partial_k v_j|u_j|^2 \big) - \frac{1}{2}(\Delta_x^3 + \Delta_y^3) a |u_jv_j|^2 \\
&+& \big(\Delta_x^2 a |\nabla u_j|^2|v_j|^2  + \Delta_y^2 a |\nabla v_j|^2|u_j|^2\big) - 4\big(\partial_{ik}^x a \partial_{i_1 i}\bar{u}_j\partial_{i_1k}u_j|v_j|^2 +\partial_{ik}^y a \partial_{i_1 i}\bar{v}_j\partial_{i_1k}v_j|u_j|^2 \big)\Big\}\,dz.
\end{eqnarray*}
Now we take $a(z):=a(x,y) = |x - y|$ where $(x,y)\in\R^{N}\times \R^{N}.$ Then calculation done in \cite{Miao 2}, yield 
$$\partial_tM_a^{\otimes_2}\leq2 \displaystyle\sum_{j=1}^m \Re \displaystyle\int_{\R^{N_1}\times \R^{N_2}}\Big(- \frac{1}{2}(\Delta_x^3 + \Delta_y^3)a|u_jv_j|^2 - 2\partial_i a\{h,w_j\}_p^i \Big)\,dz.$$
Hence, we get  
$$ \displaystyle\sum_{j=1}^m \displaystyle\int_0^T \displaystyle\int_{\R^{N_1}\times \R^{N_2}}\Big((\Delta_x^3 + \Delta_y^3)a|u_jv_j|^2 +4\partial_i a\{h,w_j\}_p^i \Big)\,dz\,dt \leq\displaystyle\sup_{[0,T]}|M_a^{\otimes_2}|.$$
Then
\begin{eqnarray*} & &\displaystyle\sum_{j=1}^m \displaystyle\int_0^T \displaystyle\int_{\R^{N_1}\times \R^{N_2}}\Big((\Delta_x^3 + \Delta_y^3)a|u_jv_j|^2 +4(1 - \frac{1}{p})\Delta_x a\displaystyle\sum_{k=1}^ma_{jk}|u_k|^p|u_j|^p|v_j|^2 \\
&+& 4(1 - \frac{1}{p})\Delta_y a\displaystyle\sum_{k=1}^ma_{jk}|v_k|^p|v_j|^p|u_j|^2 \Big)\,dz\,dt \leq\displaystyle\sup_{[0,T]}|M_a^{\otimes_2}|.\end{eqnarray*}
Taking account of the equalities $\Delta_x a = \Delta_ya=(N-1)|x-y|^{-1}$ and  
$$\Delta_x^3 a = \Delta_y^3a =\left\{\begin{array}{ll}
C\delta(x-y),&\mbox{if}\quad N=5;\\
3(N -1)(N - 3)(N - 5)|x-y|^{-5},&\mbox{if}\quad N> 5,
\end{array}\right.$$
when $N =5$, choosing $u_j = v_j,$ we get
$$ \displaystyle\sum_{j=1}^m \displaystyle\int_0^T \displaystyle\int_{\R^5} |u_j(x,t)|^4\,dx\,dt\lesssim \displaystyle\sup_{[0,T]}|M_a^{\otimes_2}|.$$
If $N>5$, it follows that
$$ \displaystyle\sum_{j=1}^m \displaystyle\int_0^T\displaystyle\int_{\R^{N}\otimes \R^{N}}\frac{|u_j(x,t)|^2|u(y,t)|^2}{|x - y|^5}\,dx\,dy\,dt\lesssim \displaystyle\sup_{[0,T]}|M_a^{\otimes_2}|. $$
This finishes the proof.


\end{document}